\documentclass[11pt]{article}

\usepackage[margin=1in]{geometry}
\usepackage{setspace}
\usepackage{amsmath,amsfonts,amssymb,mathtools}
\usepackage{amsthm}

\usepackage{times}
\usepackage[varg]{txfonts}
\usepackage{bm}

\usepackage{algorithm}
\usepackage{algpseudocode}
\algblock{Indent}{EndIndent}
\algnotext{Indent}
\algnotext{EndIndent}
\algnotext{EndIf}
\algnotext{EndFor}

\usepackage{graphicx}
\usepackage{booktabs,multirow,array}
\usepackage{enumitem,needspace}
\usepackage{float}
\usepackage{xcolor}
\usepackage[font=small,labelfont=bf]{caption}

\usepackage{natbib}
\usepackage[hidelinks]{hyperref}
\bibpunct[, ]{(}{)}{,}{a}{}{,}
\hypersetup{pdftitle={Computing Equilibria in Integer Programming Games with Shared Constraints},
            pdfauthor={Bainian Hao, Hyunwoo Lee, Robert Hildebrand, Carla Michini}}
\newtheorem{theorem}{Theorem}
\newtheorem{proposition}{Proposition}
\newtheorem{lemma}{Lemma}
\newtheorem{corollary}{Corollary}
\newtheorem{definition}{Definition}
\newtheorem{assumption}{Assumption}

\theoremstyle{definition}
\newtheorem{example}{Example}
\newtheorem{remark}{Remark}

\DeclareMathOperator{\BR}{\mathcal{BR}}
\newcommand{\calX}{\mathcal{X}}

\newcommand{\R}{\mathbb{R}}
\newcommand{\Z}{\mathbb{Z}}

\newcommand{\free}{\mathrm{free}}

\newcommand{\xhat}{\hat{x}}

\begin{document}

\title{\Large\bfseries Computing Equilibria in Integer Programming Games\\with Shared Constraints}

\author{
Bainian Hao\thanks{School of Economics and Management, Chang'an University. Email: \texttt{bainianhao@chd.edu.cn}} \and
Hyunwoo Lee\thanks{Grado Department of Industrial and Systems Engineering, Virginia Tech. Email: \texttt{hyunwoolee@vt.edu}} \and
Robert Hildebrand\thanks{Grado Department of Industrial and Systems Engineering, Virginia Tech. Email: \texttt{rhil@vt.edu}}\and
Carla Michini\thanks{Department of Industrial and Systems Engineering, University of Wisconsin--Madison. Email: \texttt{carla.michini@wisc.edu}}
}

\date{\today}

\maketitle

\begin{abstract}
We develop a cutting-plane algorithm for computing pure Nash equilibria in finite integer games with shared constraints, where a deviation may be feasible against one opponent profile and infeasible against another. Conditional equilibrium inequalities capture this dependence through an explicit activation term. We give affine encodings of costs and activation conditions and prove that the resulting inequalities characterize the equilibrium set exactly. Embedded as lazy constraints in branch-and-cut, they yield the Generalized Zero-Regret algorithm for optimizing a linear objective over exact or approximate equilibria; a bisection procedure maintains certified bounds on the smallest achievable approximation factor. We also prove that, when the conditional polytopes have integral vertices, a concave cost is constant on the minimal face containing an equilibrium strategy, so strict concavity forces vertex strategies; for uniform integer-splittable bin packing, a cost-preserving transformation yields vertex equilibria even without strict concavity. We give formulations for bin packing, network formation, and knapsack games with shared capacities and evaluate the algorithm on 3{,}300 instances, computing a socially optimal equilibrium on all but 43, certifying nonexistence on 16, and certifying approximation factors within a few percent where no exact equilibrium exists.
\end{abstract}

\noindent\textbf{Keywords:} integer programming games; shared constraints; generalized Nash equilibrium; branch-and-cut; cost sharing

\bigskip

\section{Introduction}
\label{sec:intro}

A \emph{generalized integer programming game} (GIPG) is a non-cooperative game in which each of $n$ players solves an integer program whose feasible set depends on the other players' decisions through coupling constraints.
GIPGs sit between two well-studied classes. Removing the coupling, so that each player's feasible set is independent of the opponents, recovers an \emph{integer programming game} (IPG) \citep{koppe2011rational,carvalho2018complexity}; a GIPG is thus an IPG with coupling constraints. It is also a \emph{generalized Nash equilibrium problem} (GNEP) all of whose variables are integer; GNEPs with mixed-integer variables have been studied by \citet{sagratella2017potential,sagratella2019coupling} and \citet{harks2025generalized}. Classical existence results for GNEPs assume convex compact strategy sets and costs convex in each player's own strategy, and neither holds here: the strategy sets are discrete, and the cost-sharing games below have concave costs.

Our solution concept is the \emph{pure Nash equilibrium} (PNE): a jointly feasible strategy profile from which no player can lower her cost by a unilateral, jointly feasible deviation. We also consider $\alpha$-approximate PNE, in which no player can lower her cost by more than a factor $\alpha\geq1$.

GIPGs arise naturally when discrete resources are shared.
In \emph{bin packing games}, players place items in capacitated bins and share each bin's cost in proportion to their usage \citep{bilo2006packing,hao2025integer}.
In \emph{generalized knapsack games}, players select items from a common pool subject to per-item capacity limits \citep{duguet2025branch1}.
In \emph{network formation games}, players route integer-splittable flows through a network with shared edge capacities and proportional cost sharing \citep{anshelevich2008price,chen2009network}.
In each case, one player's feasible strategies depend on how much of the shared capacity the other players have already consumed.

Computing a PNE of a GIPG is hard in a strong sense. Even for IPGs, deciding whether a PNE exists is $\Sigma_2^p$-complete \citep{carvalho2018complexity}, and GIPGs contain IPGs as a special case.
Shared constraints introduce an additional, more specific difficulty: the \emph{conditional feasibility} of deviations.
In an IPG, every privately feasible deviation is jointly feasible, so the equilibrium condition for a fixed candidate deviation is an unconditional cost comparison.
In a GIPG, the same deviation may be feasible against one opponent profile and infeasible against another, because the opponents may have exhausted the shared resources.
An inequality that enforces equilibrium must therefore be \emph{conditional}: it must impose the no-regret comparison exactly when the deviation is feasible against the current opponent strategies, and impose nothing otherwise.

This paper develops a cutting-plane algorithm for GIPGs built on \emph{conditional equilibrium inequalities} (CEIs): globally valid linear inequalities that encode the equilibrium condition together with an explicit activation term for conditional feasibility.
CEIs are separated by best-response computations and added as lazy constraints in a branch-and-cut solver, yielding the \emph{Generalized Zero-Regret} (GZR) algorithm for optimizing a linear objective over the set of exact or approximate equilibria.
We complement GZR with structural results on the \emph{vertex equilibrium structure} (VEST) of games with concave costs, which yield additional valid or optimality-preserving inequalities, and with two search procedures for approximate equilibria: random-restart best-response dynamics and a certified bisection on the approximation factor.
The algorithm is instantiated and tested on four GIPG classes: integer-splittable bin packing (ISBP), network formation games (NFG), generalized knapsack games (GKG), and generalized knapsack games with bilinear costs (GKGB).

\subsection{Literature Review}
\label{subsec:literature}
\citet{carvalho2023integer} survey computational methods for integer programming games. For pure equilibria, \citet{sagratella2016computing} develops a branching method for discrete strategy sets, and \citet{schwarze2023branch} give a branch-and-prune method for convex polyhedral strategy sets. Closest to our approach is the Zero-Regret algorithm of \citet{dragotto2023zero}, which embeds equilibrium inequalities in an integer program and thereby selects a pure equilibrium optimizing an external objective. For \emph{mixed} equilibria, \citet{cronert2024equilibrium} enumerate and select among the equilibria of finite games, and \citet{carvalho2026cutandplay} compute them by outer approximation of the convexified game. None of these methods admits shared constraints; GZR extends the approach of \citet{dragotto2023zero} to shared constraints by accounting explicitly for the conditional feasibility of deviations.

For mixed-integer GNEPs, \citet{harks2025generalized} characterize equilibria by convexification through the Nikaido--Isoda function, and \citet{sagratella2019coupling} gives a merit-function branch-and-bound method for linearly coupled games. \citet{duguet2025branch1} develop a branch-and-cut method on a high-point relaxation of the Nikaido--Isoda function, separating locally valid intersection cuts. In a companion paper, \citet{duguet2025branch2} treat approximate equilibria by bisecting the approximation factor, reusing the unexplored nodes and cuts of the previous value in a single tree.

For generalized potential games with mixed-integer variables, \citet{sagratella2017potential} proposes a Gauss--Southwell scheme that improves one player at a time, and \citet{fabiani2020vehicle} apply best-response algorithms to a mixed-integer potential game arising in vehicle coordination. Without a potential function, best-response dynamics may cycle; \citet{lee2024random} show that random restarts recover an almost-sure guarantee, finding a PNE with probability one whenever one is reachable from the restart law. We adapt their scheme to shared constraints.

Our structural results are related in spirit to \citet{rosen1965existence}, who studies games with concave payoffs maximized over fixed strategy sets. We instead consider the \emph{minimization} of concave costs over the integer points of opponent-dependent polytopes. The basic observation is elementary, namely that a concave function on a polytope attains its minimum at a vertex; integrality of the vertices is what allows this observation to be combined with the equilibrium conditions.

The applications combine discrete allocation with shared capacities. ISBP extends selfish bin packing to integer-splittable items \citep{hao2025integer}. Knapsack games are an established benchmark for equilibrium computation \citep{carvalho2016computation,carvalho2022computing,dragotto2023zero,duguet2025branch1}. Our network model uses integer-splittable flows with proportional cost sharing and edge capacities, and is related to the weighted capacitated model of \citet{Chassein2016}. Classical potential-function and existence results for network and congestion games \citep{rosenthal1973class,anshelevich2008price,chen2009network,feldman2012capacitated,fotakis2005selfish,delpia2017tu} rely on specific sharing rules and feasibility assumptions and do not automatically carry over to this setting; indeed, our network model contains ISBP instances without a PNE.

\subsection{Contributions}
\label{subsec:contributions}
The central difficulty in a GIPG is that whether a deviation is available to a player depends on what the other players are doing. We show that this dependence can be handled within a single mixed-integer linear program. The key is a family of \emph{conditional equilibrium inequalities}: each compares the cost of a player's current strategy with the cost of a fixed alternative, and is automatically switched off whenever that alternative is infeasible against the current opponents. Both the costs and the switch are expressed as affine functions of the same lifted variables that linearize the players' costs, so the inequalities are valid throughout the branch-and-bound tree and can be generated on the fly by solving best-response problems. We prove that the inequalities characterize the set of equilibria exactly, and we obtain the Generalized Zero-Regret algorithm, which optimizes any linear objective over exact or approximate equilibria or certifies that none exists. A bisection on the approximation factor, seeded by best-response dynamics, produces certified bounds on how close to an exact equilibrium one can get when none exists.

Second, we identify structures that equilibria must have when costs are concave, as in cost-sharing games: a player's equilibrium strategy is either a vertex of her feasible polytope, or the player is indifferent among all strategies on the face containing it. When costs are strictly concave this forces every equilibrium strategy to a vertex, and for integer-splittable bin packing with uniform bins we show that vertex equilibria exist even though strict concavity fails. These results translate into additional inequalities for the master problem, and we make precise which of them are valid for every equilibrium and which only preserve an optimal one, a distinction that matters whenever nonexistence or optimality is certified.

Third, we instantiate the algorithm for four classes of games, namely bin packing, network formation, and knapsack games with linear and with bilinear costs, and test it on 3{,}300 instances. GZR computes socially optimal equilibria and certifies nonexistence within minutes on most instances (in particular, it certifies that 15 regular bin packing instances admit no pure equilibrium, where one was previously known), the bisection procedure closes the approximation gap to within a few percent where no exact equilibrium exists, and the structural cuts give a measurable speedup where the stronger restriction is available. We also observe that the linear generalized knapsack benchmark of \citet{duguet2025branch1} is an exact potential game, whose social optimum is already an equilibrium; it therefore measures the cost of separating equilibrium constraints rather than the value of equilibrium selection.

Section~\ref{sec:cei} develops CEIs and GZR, Section~\ref{sec:vest} the structural results, and Section~\ref{sec:tas} the complementary search procedures. Section~\ref{sec:games} formulates the applications, Section~\ref{sec:experiments} reports the computational study, and Section~\ref{sec:conclusion} concludes. Omitted proofs and formulations are in the e-companion.

\section{Conditional Equilibrium Inequalities}
\label{sec:cei}

We consider finite, jointly constrained integer games.
Players are indexed by $N=\{1,\ldots,n\}$; player $i$ controls a vector $x^i\in\Z^{k_i}$ of integer variables, and $K=\sum_ik_i$. A strategy profile is $x=(x^1,\ldots,x^n)$, and $x^{-i}$ denotes the profile of the opponents of player $i$. Let $\calX\subseteq\Z^K$ be the set of jointly feasible profiles, defined by private constraints on each $x^i$ and by shared constraints coupling the players; we assume that $\calX$ is finite and nonempty. Let $\calX_i=\operatorname{proj}_{x^i}(\calX)$ and $\calX_{-i}=\operatorname{proj}_{x^{-i}}(\calX)$, and for $x^{-i}\in\calX_{-i}$ let
\[
\calX_i(x^{-i})=\{x^i\in\Z^{k_i}:(x^i,x^{-i})\in\calX\}
\]
be the set of strategies available to player $i$ against $x^{-i}$. Every strategy in $\calX_i(x^{-i})$ yields a jointly feasible profile. When $\calX_i(x^{-i})$ does not depend on $x^{-i}$, the game is an IPG.

Player $i$ has a nonnegative cost $\pi^i:\calX\to\R_{\geq0}$ and solves
\begin{equation}
\label{eq:player-problem}
\min\{\pi^i(x^i,x^{-i}):x^i\in\calX_i(x^{-i})\}.
\end{equation}
Nonnegativity is part of the model: the multiplicative approximation notion introduced next depends on the cost scale, and it is meaningful only for nonnegative costs.

\begin{definition}[$\alpha$-approximate PNE]
\label{def:alpha-pne}
Fix $\alpha \geq 1$.
A strategy profile $\hat{x} \in \calX$ is an \emph{$\alpha$-approximate pure Nash equilibrium} ($\alpha$-PNE) if for every player $i \in N$,
\[
\pi^i(\hat{x}^i, \hat{x}^{-i}) \leq \alpha \cdot \pi^i(x^i, \hat{x}^{-i}) \qquad \forall x^i \in \calX_i(\hat{x}^{-i}).
\]
When $\alpha = 1$, this reduces to an exact PNE.
\end{definition}

\begin{definition}[$\alpha$-approximate best response]
\label{def:alpha-br}
Fix $\alpha \geq 1$ and $\hat{x}^{-i}$.
A strategy $\hat{x}^i \in \calX_i(\hat{x}^{-i})$ is an \emph{$\alpha$-approximate best response} ($\alpha$-BR) against $\hat{x}^{-i}$ if
\[
\pi^i(\hat{x}^i, \hat{x}^{-i}) \leq \alpha \cdot \pi^i(x^i, \hat{x}^{-i}) \qquad \forall x^i \in \calX_i(\hat{x}^{-i}).
\]
Let $\BR_i^\alpha(\hat{x}^{-i})$ denote the set of all $\alpha$-BRs of player $i$ against $\hat{x}^{-i}$.
\end{definition}

For a fixed candidate deviation $\hat x^i\in\calX_i$, equilibrium requires the implication
\begin{equation}
\label{eq:cei-implication}
\hat x^i\in\calX_i(x^{-i})\quad\Longrightarrow\quad
\pi^i(x)\leq\alpha\pi^i(\hat x^i,x^{-i}).
\end{equation}
The following activation convention permits a single inequality to express it.
\begin{definition}[Activation function]
\label{def:activation}
An activation function for the candidate $\hat x^i$ satisfies
\begin{equation}
\label{eq:delta}
\delta_i(\hat x^i,x^{-i})=0\ \text{if }\hat x^i\in\calX_i(x^{-i}),
\qquad
\delta_i(\hat x^i,x^{-i})\geq1\ \text{otherwise}.
\end{equation}
\end{definition}
Any function with these two properties can serve as activation function. Two natural choices are the indicator of infeasibility, equal to one whenever $\hat x^i\notin\calX_i(x^{-i})$, and the number of shared constraints violated by the profile $(\hat x^i,x^{-i})$; the encodings of Section~\ref{sec:games} produce the latter.
We extend $\pi^i$ to take an arbitrary nonnegative value whenever the deviation is not jointly feasible, that is, whenever $(\hat x^i,x^{-i})\notin\calX$. For $M_i\geq\max_{x\in\calX}\pi^i(x)$, the \emph{conditional equilibrium inequality} (CEI) for $\hat x^i$ is
\begin{equation}
\label{eq:cei}
\pi^i(x)\leq\alpha\pi^i(\hat x^i,x^{-i})+M_i\delta_i(\hat x^i,x^{-i}).
\end{equation}
It enforces the cost comparison when the deviation is feasible and is automatically satisfied otherwise. The modeling task is to represent all three functions in~\eqref{eq:cei} affinely in a common set of variables (Section~\ref{subsec:lifted}); the following example shows why strategy variables alone are not enough.

\begin{example}[A conditional deviation]
Consider two players with integer decisions $0\leq x^1\leq5$ and $1\leq x^2\leq4$ coupled by the shared constraint $x^1+x^2\leq6$, and let $\pi^1(x)=(6-x^1)x^2$. The candidate deviation $\hat x^1=4$ is feasible exactly when $x^2\leq2$, in which case equilibrium requires $\pi^1(x)\leq2\alpha x^2$; when $x^2\in\{3,4\}$ the deviation is infeasible and must impose no restriction. A binary indicator for the event $x^2>2$ and an exact linearization of the product $x^1x^2$ turn these two cases into a single linear CEI.
\end{example}

\subsection{Lifted Formulation}
\label{subsec:lifted}
Let $w\in\R^q$ collect auxiliary variables, with prescribed domain $W$ specifying which coordinates are integer or binary. We refer to $(x,w)$ as the \emph{master variables}, since they are the variables of the master problem solved by GZR. Let $\mathcal H\subseteq\R^{K+q}$ be a rational polyhedron and define
\[
\mathcal H_{\mathrm{MI}}:=\mathcal H\cap(\Z^K\times W),
\qquad \operatorname{proj}_x(\mathcal H_{\mathrm{MI}})=\calX.
\]
For each player we require an affine function $\tilde\pi^i$, the \emph{lifted cost}, such that
\[
\tilde\pi^i(x,w)=\pi^i(x)\qquad\text{for every }(x,w)\in\mathcal H_{\mathrm{MI}}.
\]
Feasibility of a deviation is then characterized in the lifted space by
\begin{equation}
\label{eq:delta-lifted}
\delta_i(\hat x^i,x^{-i})=0
\quad\Longleftrightarrow\quad
\exists\hat w\in W:(\hat x^i,x^{-i},\hat w)\in\mathcal H_{\mathrm{MI}}.
\end{equation}
The lifted equilibrium set is
\(
\mathcal E^\alpha(\mathcal G)=\{(x,w)\in\mathcal H_{\mathrm{MI}}:x\text{ is an }\alpha\text{-PNE}\}.
\)

\subsection{CEI Exactness}
\label{subsec:validity}
We now collect the main properties of the CEI system. To embed the CEI~\eqref{eq:cei} in the master problem, both terms on its right-hand side must be affine in the master variables for each fixed deviation $\hat x^i$. This is the content of the following assumption.

\begin{assumption}[Affine CEI representation]
\label{ass:cei-computability}
For every player $i \in N$ and every candidate deviation $\hat{x}^i \in \calX_i$ there exist affine functions $\tilde{\pi}^i_{\hat{x}^i},\tilde\delta_{i,\hat x^i} : \R^{K+q} \to \R$, the \emph{lifted deviation cost} and the \emph{lifted activation function}, such that for every $(x,w)\in\mathcal H_{\mathrm{MI}}$:
(i)~$\tilde{\pi}^i_{\hat{x}^i}(x, w) = \pi^i(\hat{x}^i, x^{-i})$ whenever $\hat{x}^i \in \calX_i(x^{-i})$;
(ii)~$\tilde{\pi}^i_{\hat{x}^i}(x, w) \geq 0$; and
(iii)~$\tilde\delta_{i,\hat x^i}(x,w)=\delta_i(\hat x^i,x^{-i})$.
\end{assumption}

In the simplest case, $\tilde{\pi}^i_{\hat{x}^i}$ is obtained by substituting $\hat{x}^i$ into $\tilde{\pi}^i$, as in GKG. For the one-hot ISBP and NFG encodings it is a separate affine function whose coefficients are computed from $\hat{x}^i$ at separation time using the joint state information carried by $w$; the explicit formulas are~\eqref{eq:supp-isbp-cei} and~\eqref{eq:supp-nfg-cei} in \S\ref{app:explicit-formulations}. The GKGB formula appears in~\eqref{eq:gkgb-cei}.
All four game classes in this paper satisfy the assumption by construction; see Section~\ref{sec:games}.

Note that the lifted deviation cost is an affine function of the master variables $(x,w)$ with $\hat x^i$ held fixed: it evaluates the cost of the deviation $\hat x^i$ against the opponent profile $x^{-i}$ encoded by $(x,w)$.

For a fixed $\hat x^i\in\calX_i$, the lifted CEI is
\begin{equation}
\label{eq:cei-lifted}
\tilde\pi^i(x,w)\leq\alpha\tilde\pi^i_{\hat x^i}(x,w)
+M_i\tilde\delta_{i,\hat x^i}(x,w).
\end{equation}
Let $\mathcal E^\alpha_{\mathrm{cei}}$ consist of the points of $\mathcal H_{\mathrm{MI}}$ satisfying~\eqref{eq:cei-lifted} for every $i$ and $\hat x^i\in\calX_i$, and let $P^\alpha_{\mathrm{cei}}=\operatorname{conv}(\mathcal E^\alpha_{\mathrm{cei}})$.

\begin{proposition}[CEI exactness and closure]\label{prop:cei}
Under Assumption~\ref{ass:cei-computability}, $\mathcal E^\alpha_{\mathrm{cei}}=\mathcal E^\alpha(\mathcal G)$ for every $\alpha\geq1$. If moreover each $x\in\calX$ has a unique auxiliary representation in $\mathcal H_{\mathrm{MI}}$, then $P^\alpha_{\mathrm{cei}}=\operatorname{conv}(\mathcal E^\alpha(\mathcal G))$ is a rational polytope whose vertices lie in $\mathcal E^\alpha(\mathcal G)$ and whose points in $\Z^K\times W$ are lifted $\alpha$-PNE; in particular, if $\mathcal E^\alpha(\mathcal G)\neq\emptyset$, every linear objective attains its minimum over $P^\alpha_{\mathrm{cei}}$ at a lifted $\alpha$-PNE.
\end{proposition}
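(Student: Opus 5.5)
The plan is to prove the set equality by double inclusion, working pointwise on $\mathcal H_{\mathrm{MI}}$, and then to derive the polyhedral statements from finiteness together with the uniqueness hypothesis. Fix $(x,w)\in\mathcal H_{\mathrm{MI}}$; then $x\in\calX$ and $\tilde\pi^i(x,w)=\pi^i(x)$. For a candidate $\hat x^i\in\calX_i$ there are two cases, and Assumption~\ref{ass:cei-computability} handles each. If $\hat x^i\in\calX_i(x^{-i})$, then (i) and (iii) give $\tilde\pi^i_{\hat x^i}(x,w)=\pi^i(\hat x^i,x^{-i})$ and $\tilde\delta_{i,\hat x^i}(x,w)=0$, so \eqref{eq:cei-lifted} reduces exactly to the $\alpha$-PNE inequality $\pi^i(x)\le\alpha\pi^i(\hat x^i,x^{-i})$. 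If $\hat x^i\notin\calX_i(x^{-i})$, then (iii) gives $\tilde\delta_{i,\hat x^i}\ge1$, and (ii) gives $\tilde\pi^i_{\hat x^i}\ge0$. The right-hand side is then at least $M_i\ge\max_{\calX}\pi^i\ge\pi^i(x)$, so the inequality holds automatically.

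For the inclusion $\mathcal E^\alpha(\mathcal G)\subseteq\mathcal E^\alpha_{\mathrm{cei}}$: since $x$ is an $\alpha$-PNE, every feasible candidate satisfies its CEI by the first case, and every infeasible one by the second. For the reverse inclusion, let $x^i\in\calX_i(x^{-i})$ be any deviation. Then $x^i\in\calX_i$ because the profile $(x^i,x^{-i})$ lies in $\calX$. The corresponding CEI is therefore in the system, and by the first case it yields the $\alpha$-PNE condition for that deviation. Both directions are routine.

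For the closure statement, I use the uniqueness of the auxiliary representation. Because $\calX$ is finite, the set $\mathcal H_{\mathrm{MI}}$ is finite, and so is its subset $\mathcal E^\alpha_{\mathrm{cei}}=\mathcal E^\alpha(\mathcal G)$. Hence $P^\alpha_{\mathrm{cei}}$ is the convex hull of finitely many points. These points are rational: $x$ is integral, and $w$ is the unique point of the rational polyhedron $\mathcal H$ in the fiber over $x$ subject to the domain restrictions. The auxiliary coordinates that are not integer-constrained must then be determined by a rational linear system, which I will argue from the fact that the fiber of $\mathcal H$ over a fixed $x$, intersected with the integrality pattern of $W$, is a single point. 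This makes $P^\alpha_{\mathrm{cei}}$ a rational polytope, and its vertices lie among its generators, that is, in $\mathcal E^\alpha(\mathcal G)$.

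The claim that every point of $P^\alpha_{\mathrm{cei}}\cap(\Z^K\times W)$ is a lifted $\alpha$-PNE is where I expect the main obstacle. A convex combination of equilibria can be an integral non-equilibrium, so I cannot argue through the CEIs directly. Instead I argue that $P^\alpha_{\mathrm{cei}}\subseteq\operatorname{conv}(\mathcal H_{\mathrm{MI}})\subseteq\mathcal H$, so any point of $P^\alpha_{\mathrm{cei}}\cap(\Z^K\times W)$ lies in $\mathcal H_{\mathrm{MI}}$. Uniqueness then ties $w$ to $x$, and I will show that such a point $(x,w)$ in the convex hull satisfies the CEIs. Since the CEIs are affine, they hold on all of $P^\alpha_{\mathrm{cei}}$ once they hold on its generators, so $(x,w)\in\mathcal E^\alpha_{\mathrm{cei}}=\mathcal E^\alpha(\mathcal G)$ by the first part. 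Finally, if $\mathcal E^\alpha(\mathcal G)\neq\emptyset$, then $P^\alpha_{\mathrm{cei}}$ is a nonempty polytope. A linear objective attains its minimum over it at a vertex, which is a lifted $\alpha$-PNE.
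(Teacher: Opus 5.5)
Your proof is correct and follows the paper's argument: exactness comes from the feasible/infeasible case split using (i)--(iii) and $M_i\geq\max_{\calX}\pi^i$, finiteness comes from uniqueness, and a point of $P^\alpha_{\mathrm{cei}}\cap(\Z^K\times W)$ lies in $\mathcal H_{\mathrm{MI}}$ and satisfies every CEI because the CEIs are affine and hold at the generators. One clarification: your aside that a convex combination of equilibria ``can be an integral non-equilibrium'' holds only when integrality refers to $x$ alone (the $w$-part may leave $W$); for points of $\Z^K\times W$ your final step is precisely the direct CEI argument, and uniqueness of $w$ plays no role there.
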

The proof is in \S\ref{app:proof-cei}. The uniqueness hypothesis only ensures that $\mathcal E^\alpha(\mathcal G)$ is finite, so that its convex hull is a polytope; all encodings of Section~\ref{sec:games} satisfy it, since $w$ is determined by $x$ in each of them. Proposition~\ref{prop:cei} extends the equilibrium-inequality formulation of \citet{dragotto2023zero}, in which every deviation is unconditionally feasible, to games with shared constraints.

\subsection{Generalized Zero-Regret Algorithm}
\label{subsec:gzr}

Let $F(x,w)$ be a linear objective. GZR seeks an $F$-optimal $\alpha$-PNE or proves that none exists. At $\alpha=1$, minimizing $F=\sum_i\tilde\pi^i$ gives the minimum social cost over all PNE, which is the numerator of the cost-based \emph{price of stability} (POS); the denominator is the unrestricted minimum social cost, which must be positive for the ratio to be defined.

GZR (Algorithm~\ref{alg:gzr}) solves a master problem over $\mathcal H_{\mathrm{MI}}$ by branch-and-cut and checks each candidate $(\bar x,\bar w)$ by solving one best-response problem per player. Each player whose best response improves on $\bar x^i$ by more than the factor $\alpha$ generates a CEI, which is added as a lazy constraint and cuts off the candidate. If no player has such a deviation and the best-response problems are solved to optimality, the candidate is an $\alpha$-PNE and becomes the incumbent. Algorithm~\ref{alg:gzr} terminates only when the branch-and-cut of Step~1 does; Steps~\ref{gzr:cb-start}--\ref{gzr:cb-end} either discard the current candidate by adding violated constraints or record it as the incumbent. In our applications the best-response problem in Step~\ref{gzr:br} is a MILP: a knapsack problem for GKG and GKGB, and for ISBP and NFG, a reduced one-hot formulation in which the opponents' loads are fixed.

\begin{algorithm}[t]
\small
\caption{Generalized Zero-Regret (GZR)}\label{alg:gzr}
\begin{algorithmic}[1]
\Require $\mathcal H_{\mathrm{MI}}$, linear objective $F$, $\alpha\geq1$, a set $\Phi_0$ of inequalities satisfied by every point of $\mathcal E^\alpha(\mathcal G)$
\State Solve $\min\{F(x,w):(x,w)\in\mathcal H_{\mathrm{MI}}\text{ satisfying }\Phi_0\}$ by branch-and-cut; whenever the solver finds a mixed-integer feasible solution $(\bar x,\bar w)$, called a \emph{candidate}, execute Steps~\ref{gzr:cb-start}--\ref{gzr:cb-end} before accepting it as incumbent:
\Indent
\State $\mathit{cut}\gets\textsc{false}$ \label{gzr:cb-start}
\For{$i\in N$}
  \State Solve the best-response problem~\eqref{eq:player-problem} of player $i$ against $\bar x^{-i}$; let $\hat x^i$ be an optimal solution. \label{gzr:br}
  \If{$\pi^i(\bar x)>\alpha\,\pi^i(\hat x^i,\bar x^{-i})$}
    \State Add the CEI~\eqref{eq:cei-lifted} for $(i,\hat x^i)$ as a lazy constraint (the candidate violates it and is discarded); $\mathit{cut}\gets\textsc{true}$.
  \EndIf
\EndFor
\State \textbf{if} $\mathit{cut}=\textsc{false}$ \textbf{then} accept $(\bar x,\bar w)$ as the new incumbent. \label{gzr:cb-end}
\EndIndent
\State \Return the final incumbent, if any, together with the status \textsc{Optimal} if branch-and-cut terminated with an incumbent, \textsc{Infeasible} if it proved the master problem infeasible, and \textsc{Unresolved} otherwise.
\end{algorithmic}
\end{algorithm}

\begin{proposition}[GZR correctness]\label{prop:gzr-correct}
Suppose that every best-response problem in Step~\ref{gzr:br} of Algorithm~\ref{alg:gzr} is solved to optimality and that every inequality in $\Phi_0$ is satisfied by all points of $\mathcal E^\alpha(\mathcal G)$. Then every candidate accepted in Step~\ref{gzr:cb-end} is an $\alpha$-PNE; if the algorithm terminates with status \textsc{Optimal}, the final incumbent minimizes $F$ over $\mathcal E^\alpha(\mathcal G)$; and if it terminates with status \textsc{Infeasible}, then $\mathcal E^\alpha(\mathcal G)=\emptyset$.
\end{proposition}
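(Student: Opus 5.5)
The proof rests on one invariant: every constraint ever present in the master problem is satisfied by every point of $\mathcal E^\alpha(\mathcal G)$. Granting this, the three claims follow from properties of branch-and-cut with lazy constraints. The invariant holds initially, since $\mathcal H_{\mathrm{MI}}$ contains $\mathcal E^\alpha(\mathcal G)$ by definition and $\Phi_0$ is valid by hypothesis. It is preserved because every lazy constraint added is a lifted CEI~\eqref{eq:cei-lifted} for some $(i,\hat x^i)$ with $\hat x^i\in\calX_i$. By the inclusion $\mathcal E^\alpha(\mathcal G)\subseteq\mathcal E^\alpha_{\mathrm{cei}}$ of Proposition~\ref{prop:cei}, such a CEI is satisfied by every lifted $\alpha$-PNE. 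Here $\hat x^i\in\calX_i(\bar x^{-i})\subseteq\calX_i$, so it is a legitimate candidate deviation.

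\textbf{Step 1: accepted candidates are $\alpha$-PNE.} Let $(\bar x,\bar w)$ be accepted in Step~\ref{gzr:cb-end}. Then $(\bar x,\bar w)\in\mathcal H_{\mathrm{MI}}$, so $\bar x\in\calX$. For each $i$, the best response $\hat x^i$ was computed to optimality and no cut was added, so
\[
\pi^i(\bar x)\le\alpha\,\pi^i(\hat x^i,\bar x^{-i})\le\alpha\,\pi^i(x^i,\bar x^{-i})\quad\text{for all }x^i\in\calX_i(\bar x^{-i}).
\]
The second inequality uses optimality of $\hat x^i$ and $\alpha\ge1>0$. Hence $\bar x$ is an $\alpha$-PNE, and $(\bar x,\bar w)\in\mathcal E^\alpha(\mathcal G)$.

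\textbf{Step 2: the rejection step is sound.} When a cut is added, the candidate violates it, so the search cannot accept the same point again. Check the violation: at $(\bar x,\bar w)$ we have $\hat x^i\in\calX_i(\bar x^{-i})$. Assumption~\ref{ass:cei-computability}(iii) then gives $\tilde\delta_{i,\hat x^i}(\bar x,\bar w)=0$, and part (i) gives $\tilde\pi^i_{\hat x^i}(\bar x,\bar w)=\pi^i(\hat x^i,\bar x^{-i})$. Also $\tilde\pi^i(\bar x,\bar w)=\pi^i(\bar x)$. So the lifted CEI reduces to the strict inequality that triggered the cut, and it is violated. This is what lets the lazy-constraint mechanism function; correctness of the bounds only needs validity.

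\textbf{Step 3: Optimal and Infeasible statuses.} Branch-and-cut with lazy constraints terminates with a proof of optimality or infeasibility relative to the final constraint set $\mathcal C$. Every candidate it returns has passed the callback. Let $R$ be the feasible region defined by $\mathcal H_{\mathrm{MI}}$ together with $\mathcal C$. By the invariant, $\mathcal E^\alpha(\mathcal G)\subseteq R$.

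If the status is \textsc{Optimal}, the incumbent minimizes $F$ over the points of $R$ accepted by the callback. I would argue that every $R$-point with smaller objective would have been found as a candidate and then either accepted or cut off. Such a point, if it lies in $\mathcal E^\alpha(\mathcal G)$, is never cut, by the invariant. The incumbent itself lies in $\mathcal E^\alpha(\mathcal G)$ by Step 1. Combining these gives $F(\text{incumbent})=\min_{\mathcal E^\alpha(\mathcal G)}F$.

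If the status is \textsc{Infeasible}, then $R=\emptyset$. Since $\mathcal E^\alpha(\mathcal G)\subseteq R$, we get $\mathcal E^\alpha(\mathcal G)=\emptyset$.

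\textbf{Expected obstacle.} The delicate point is Step 3: formalizing what termination of branch-and-cut with lazy constraints guarantees. I would state this as a standing property of the solver: pruning is valid with respect to the current constraint set, and every integer-feasible point is routed through the callback. With that property, the containment $\mathcal E^\alpha(\mathcal G)\subseteq R$ does all the work.
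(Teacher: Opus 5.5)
Your proposal is correct and follows the same route as the paper. Optimal best responses make every accepted candidate an $\alpha$-PNE. Validity of $\Phi_0$ and of each separated CEI (via the inclusion $\mathcal E^\alpha(\mathcal G)\subseteq\mathcal E^\alpha_{\mathrm{cei}}$ of Proposition~\ref{prop:cei}) keeps every lifted equilibrium feasible for the master, and completeness of the branch-and-cut search then gives the \textsc{Optimal} and \textsc{Infeasible} conclusions. Your Step~2 (the separated CEI is actually violated at the candidate) and your explicit statement of the assumed solver property spell out details that the paper's terse proof leaves implicit, without changing the argument.
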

The proof is in \S\ref{app:proof-gzr}. If the search is stopped before completion and no profile has been accepted, neither the existence nor the nonexistence of an $\alpha$-PNE can be concluded. Since $\calX$ is finite and each separated CEI permanently rejects a nonequilibrium profile, GZR terminates after finitely many separations. These guarantees assume optimal best responses; Section~\ref{sec:experiments} discusses how they relate to solver tolerances in the implementation.

Additional restrictions on the master require care. Inequalities valid for all equilibria, such as the weak VEST inequalities of Section~\ref{sec:vest}, can be added directly. A restriction that preserves only some equilibrium representatives must also preserve an optimum for the chosen objective $F$, and imposing several such restrictions jointly requires a simultaneous preservation argument; \S\ref{app:symmetry} provides one for ISBP.

\section{Vertex Equilibrium Structure}
\label{sec:vest}

This section develops structural properties of equilibria in games whose costs are concave in each player's own strategy, as is the case for the proportional cost-sharing games of Section~\ref{sec:games}. Fix a jointly feasible opponent profile $x^{-i}$, let $P^i(x^{-i})$ be a polytope whose integer points are exactly $\calX_i(x^{-i})$, and extend $\pi^i(\cdot,x^{-i})$ to this polytope. The main result, face constancy under concavity and integral vertices, holds for arbitrary conditional polytopes. For the explicit inequalities derived later we specialize to the equality-and-bound form
\[
P^i(x^{-i})=\{x^i\in\R^{k_i}:A^ix^i=b^i(x^{-i}),\ 0\leq x^i_j\leq u_{ij}(x^{-i})\}.
\]
and write $P^i$ when the opponents' profile is clear from context. We show that strict concavity forces equilibrium strategies to vertices, while separability restricts the coordinates that can vary along a nontrivial equilibrium face. Omitted proofs for this section and Section~\ref{sec:games} are in \S\ref{app:moved-proofs}.

\begin{assumption}[Concave Costs]
\label{ass:concave}
For each player $i \in N$ and each opponents' profile $x^{-i}$,
the cost function $\pi^i(\cdot, x^{-i})$ is concave on $P^i(x^{-i})$.
\end{assumption}

\begin{assumption}[Integer Vertices]
\label{ass:integer-vertices}
For each player $i \in N$ and each opponents' profile $x^{-i}$,
every vertex of $P^i(x^{-i})$ is an integer point.
\end{assumption}

Assumption~\ref{ass:integer-vertices} holds whenever the constraint matrix
$A^i$ is totally unimodular and $b^i(x^{-i})$, $u_{ij}(x^{-i})$ are
integer for every $x^{-i}$.
This is the case for the two cost-sharing games considered in this paper.

For a point $x^i$ of a polytope, a \emph{direction within its minimal face} is a nonzero vector $d$ such that $x^i\pm\varepsilon d$ lie in that face for some $\varepsilon>0$. For the equality-and-bound representation, define the set of \emph{free coordinates}
\[
\free(x^i)=\{j:0<x^i_j<u_{ij}(x^{-i})\}.
\]
The directions within the minimal face are precisely the nonzero $d$ satisfying $A^id=0$ and $d_j=0$ outside $\free(x^i)$. A point is a vertex exactly when no such direction exists.

\begin{theorem}[Weak VEST]
\label{thm:weak-vest}
Under Assumptions~\ref{ass:concave} and~\ref{ass:integer-vertices}, if $x$ is a PNE, then for every player $i$ the cost $\pi^i(\cdot, x^{-i})$ is constant on the minimal face of $P^i(x^{-i})$ containing $x^i$.
\end{theorem}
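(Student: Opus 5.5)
Fix a player $i$ and write $F$ for the minimal face of $P^i=P^i(x^{-i})$ containing $x^i$, and $g=\pi^i(\cdot,x^{-i})$ restricted to $F$. Our plan is to combine two facts. First, $F$ is a polytope whose vertices are vertices of $P^i$, so by Assumption~\ref{ass:integer-vertices} they are integer points of $P^i$. Such points lie in $\calX_i(x^{-i})$, hence they are jointly feasible deviations. Second, $x^i$ lies in the relative interior of $F$, because $F$ is the minimal face containing it. The equilibrium condition then says that $g(x^i)\le g(v)$ for every vertex $v$ of $F$. Concavity (Assumption~\ref{ass:concave}) will force $g$ to be constant.

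The key steps are as follows. (1) Since $x^i\in F$ and $F=\operatorname{conv}(V)$ for its vertex set $V$, we write $x^i=\sum_{v\in V}\lambda_v v$ with $\lambda\ge0$ and $\sum\lambda_v=1$. (2) Concavity gives $g(x^i)\ge\sum\lambda_v g(v)\ge \min_{v}g(v)$. The PNE property gives $g(x^i)\le g(v)$ for all $v\in V$. Hence $g(x^i)=\min_V g$, and $x^i$ minimizes $g$ over $F$, since a concave function on a polytope attains its minimum at a vertex. (3) To upgrade this to constancy, take any $y\in F$. Because $x^i$ is in the relative interior of $F$, there is $\varepsilon>0$ with $z=x^i+\varepsilon(x^i-y)\in F$. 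Then $x^i$ is a strict convex combination of $y$ and $z$, namely $x^i=\tfrac{1}{1+\varepsilon}z+\tfrac{\varepsilon}{1+\varepsilon}y$. Concavity gives $g(x^i)\ge\tfrac{1}{1+\varepsilon}g(z)+\tfrac{\varepsilon}{1+\varepsilon}g(y)$. Since both $g(y)$ and $g(z)$ are at least $\min_F g=g(x^i)$, equality must hold throughout, so $g(y)=g(x^i)$.

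The one subtle point, which we expect to be the main obstacle, is step~(2): we must justify that comparing against the vertices of $F$ is legitimate. This needs the vertices of $F$ to be actual strategies in $\calX_i(x^{-i})$. It also needs the cost values on $P^i$ to agree with $\pi^i$ at those integer points, which holds because the extension of $\pi^i$ only adds values at non-integer points. We also handle the degenerate case in which $x^i$ is itself a vertex: then $F=\{x^i\}$ and the claim is trivial. The argument uses only the exact ($\alpha=1$) equilibrium inequality for deviations to vertices.
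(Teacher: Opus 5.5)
Your proof is correct and follows the paper's argument: integer vertices are feasible deviations, so concavity makes $x^i$ a minimizer, and the relative-interior decomposition $x^i=\tfrac{1}{1+\varepsilon}z+\tfrac{\varepsilon}{1+\varepsilon}y$ then forces $g(y)=g(x^i)$. The only difference is cosmetic: you prove minimality over $F$ using the vertices of $F$, whereas the paper proves it over all of $P^i(x^{-i})$, and either version suffices for the final step.
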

\begin{proof}
Fix $i$, write $f=\pi^i(\cdot,x^{-i})$, and let $F$ be the minimal face of $P^i(x^{-i})$ containing $x^i$. If $x^i$ is a vertex, then $F=\{x^i\}$ and the claim holds trivially. Suppose $F$ has dimension at least one and let $y\in F$, $y\neq x^i$.

We first show that $x^i$ minimizes $f$ over $P^i(x^{-i})$. By Assumption~\ref{ass:integer-vertices} every vertex $v$ of $P^i(x^{-i})$ is an integer point, hence a feasible deviation, so $f(v)\geq f(x^i)$ because $x$ is a PNE. Every point of $P^i(x^{-i})$ is a convex combination $\sum_k\lambda_kv_k$ of vertices, and concavity gives $f(\sum_k\lambda_kv_k)\geq\sum_k\lambda_kf(v_k)\geq f(x^i)$.

Since $x^i$ lies in the relative interior of $F$, the point $z=x^i+\varepsilon(x^i-y)$ belongs to $F$ for some $\varepsilon>0$, and $x^i=\lambda y+(1-\lambda)z$ with $\lambda=\varepsilon/(1+\varepsilon)\in(0,1)$. By concavity and minimality of $x^i$, $f(x^i)\geq\lambda f(y)+(1-\lambda)f(z)\geq f(x^i)$, so equality holds throughout and $f(y)=f(x^i)$.
\end{proof}

Theorem~\ref{thm:weak-vest} says that a nonvertex equilibrium strategy can only occur where the player is indifferent along an entire face. Strengthening Assumption~\ref{ass:concave} to strict concavity rules this out and pins every PNE strategy to a vertex.

\begin{assumption}[Strictly Concave Costs]
\label{ass:strictly-concave}
For each player $i \in N$ and each opponents' profile $x^{-i}$,
the cost function $\pi^i(\cdot, x^{-i})$ is strictly concave on $P^i(x^{-i})$.
\end{assumption}

\begin{corollary}[Strong VEST]
\label{thm:strong-vest}
Under Assumptions~\ref{ass:strictly-concave} and~\ref{ass:integer-vertices},
if $x$ is a PNE then $x^i$ is a vertex of $P^i(x^{-i})$ for every $i \in N$.
\end{corollary}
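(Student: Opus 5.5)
The plan is to derive the corollary directly from Theorem~\ref{thm:weak-vest} by contradiction. Strict concavity implies concavity, so Assumption~\ref{ass:strictly-concave} gives Assumption~\ref{ass:concave}. Together with Assumption~\ref{ass:integer-vertices}, the hypotheses of Theorem~\ref{thm:weak-vest} then hold. Fix a PNE $x$ and a player $i$, write $f=\pi^i(\cdot,x^{-i})$, and let $F$ be the minimal face of $P^i(x^{-i})$ containing $x^i$. By the weak VEST, $f$ is constant on $F$.

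Suppose, for contradiction, that $x^i$ is not a vertex. Then $F$ has dimension at least one, so there is a direction $d\neq 0$ within the minimal face. Hence $x^i\pm\varepsilon d\in F$ for some $\varepsilon>0$. Set $y=x^i+\varepsilon d$ and $z=x^i-\varepsilon d$. These are two distinct points of $F\subseteq P^i(x^{-i})$ with $x^i=\tfrac12 y+\tfrac12 z$.

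Strict concavity of $f$ on $P^i(x^{-i})$ gives
\[
f(x^i)>\tfrac12 f(y)+\tfrac12 f(z).
\]
Face constancy gives $f(y)=f(z)=f(x^i)$, so the right-hand side equals $f(x^i)$, a contradiction. Hence $F=\{x^i\}$, that is, $x^i$ is a vertex of $P^i(x^{-i})$. Since $i$ was arbitrary, this holds for every player.

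There is no real obstacle. The only points to check are that strict concavity is applied on the same polytope on which the weak VEST asserts constancy, and that a non-vertex point admits two distinct points of its minimal face having it as their midpoint. The latter follows from the characterization of directions within the minimal face given just before Theorem~\ref{thm:weak-vest}.
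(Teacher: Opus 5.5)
Your proof is correct and takes the same approach as the paper: strict concavity implies concavity, so weak VEST gives constancy of $\pi^i(\cdot,x^{-i})$ on the minimal face, and a nondegenerate face then contradicts strict concavity. You spell out the paper's remark that ``a strictly concave function cannot be constant on a segment'' by using the midpoint $x^i=\tfrac12 y+\tfrac12 z$.
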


\begin{proof}
Suppose $x^i$ is not a vertex of $P^i(x^{-i})$. By Theorem~\ref{thm:weak-vest}, $\pi^i(\cdot, x^{-i})$ is constant on the minimal face $F$ of $P^i(x^{-i})$ containing $x^i$, and $\dim(F) \geq 1$. A strictly concave function cannot be constant on a segment, contradicting Assumption~\ref{ass:strictly-concave}. Hence $\dim(F) = 0$, i.e., $x^i$ is a vertex.
\end{proof}

When the cost is separable, $\pi^i(x^i,x^{-i})=\sum_{j} f^i_j(x^i_j;x^{-i})$,
Theorem~\ref{thm:weak-vest} can be sharpened even without strict concavity. For $x^i\in P^i(x^{-i})$ let
$S=\free(x^i)$, and partition $S$ according to the local behavior of each $f^i_j$ at
$x^i_j$:
\[
\begin{gathered}
S^i_0(x^i)=\bigl\{\,j\in S:\ f^i_j(\,\cdot\,;x^{-i})\text{ is affine on a neighborhood of }
x^i_j\text{ in }[0,u_{ij}(x^{-i})]\,\bigr\},
\\
S^i_+(x^i)=S\setminus S^i_0(x^i).
\end{gathered}
\]

These neighborhoods are well defined since free coordinates are interior to their bound intervals.

\begin{corollary}[Weak VEST for separable functions]\label{thm:weak-vest-separable}
Suppose $\pi^i(x^i,x^{-i})=\sum_{j} f^i_j(x^i_j;x^{-i})$ for all $i$, where each
$f^i_j(\,\cdot\,;x^{-i})$ is concave. Under Assumption~\ref{ass:integer-vertices}, if $x$ is a
PNE and $x^i$ is not a vertex of $P^i(x^{-i})$, then every feasible direction $d$ at $x^i$
within the minimal face $F$ of $P^i(x^{-i})$ containing $x^i$ satisfies $d_j=0$ for all
$j\in S^i_+(x^i)$: the only coordinates that can vary within $F$ are those in $S^i_0(x^i)$.
\end{corollary}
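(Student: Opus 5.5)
We take the face constancy of Theorem~\ref{thm:weak-vest} as given and then use the separable structure to rule out any movement in the coordinates of $S^i_+(x^i)$. Write $f=\pi^i(\cdot,x^{-i})=\sum_j f^i_j(\cdot;x^{-i})$. A sum of concave functions is concave, so Assumption~\ref{ass:concave} holds. Theorem~\ref{thm:weak-vest} therefore applies, and $f$ is constant on the minimal face $F$ containing $x^i$.

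Let $d$ be a direction within $F$, and fix $\varepsilon>0$ such that $x^i+td\in F$ for all $|t|\le\varepsilon$. Then $g(t):=f(x^i+td)=\sum_j f^i_j(x^i_j+td_j;x^{-i})$ is constant on $[-\varepsilon,\varepsilon]$. Each summand $g_j(t)=f^i_j(x^i_j+td_j;x^{-i})$ is concave in $t$, since it is a concave function composed with an affine map. Only coordinates $j\in\free(x^i)$ contribute a nonconstant term, because $d_j=0$ elsewhere.

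The key step is that a finite sum of concave functions that is constant on an interval forces every summand to be affine there. Indeed, $-\sum_j g_j$ is constant, so each $-g_j=-\sum_j g_j+\sum_{k\neq j}g_k$ is concave. Each $g_j$ is therefore both concave and convex, hence affine on $[-\varepsilon,\varepsilon]$.

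Now take $j\in S^i_+(x^i)$ and suppose $d_j\neq0$. As $t$ ranges over $[-\varepsilon,\varepsilon]$, the point $x^i_j+td_j$ sweeps an interval $[x^i_j-\varepsilon|d_j|,\,x^i_j+\varepsilon|d_j|]$. This interval is a neighborhood of $x^i_j$, and it lies in $[0,u_{ij}(x^{-i})]$ because $x^i+td\in F\subseteq P^i$ respects the bounds. On this interval $f^i_j$ is affine, since $g_j$ is affine in $t$ and $d_j\neq0$ allows us to reparametrize by $s=x^i_j+td_j$. This contradicts $j\notin S^i_0(x^i)$, so $d_j=0$ for all $j\in S^i_+(x^i)$.

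The main obstacle is only a technicality: making sure the interval swept by coordinate $j$ is a genuine two-sided neighborhood inside the bound interval, so that it matches the definition of $S^i_0$. The symmetric choice $\pm\varepsilon$, which comes from the definition of a direction within the minimal face, handles this directly.
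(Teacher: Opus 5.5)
Your proposal is correct and follows the paper's proof essentially step by step. You apply Theorem~\ref{thm:weak-vest} to get constancy of $\sum_j f^i_j(x^i_j+td_j;x^{-i})$ along $d$, deduce that each summand is affine in $t$, and reparametrize by $s=x^i_j+td_j$ to contradict $j\in S^i_+(x^i)$. The only difference is cosmetic: you show each summand is affine because it is both concave and convex, while the paper notes that the nonnegative concavity gaps sum to zero and so each vanishes, which is equivalent.
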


\section{Complementary Solution Algorithms}
\label{sec:tas}

GZR is complemented by two procedures: a best-response heuristic that supplies warm starts and initial approximation factors, and a bisection on $\alpha$ for instances without an exact PNE.

We adapt the random-restart best-response dynamics (RRR-BRD) of \citet{lee2024random} to shared constraints. Each attempt starts from a jointly feasible profile. A round visits the players in random order and lets each apply a strictly improving best response computed against the current profile; because best responses are drawn from the conditional strategy sets, every update preserves joint feasibility. A round with no updates certifies a PNE under optimal best responses, since every player was then checked against the same unchanged profile. After a prescribed number of rounds without such a certificate, the attempt restarts from a fresh random profile.

For an arbitrary feasible profile $x$, let $b_i(x^{-i})=\min_{x^i\in\calX_i(x^{-i})}\pi^i(x^i,x^{-i})$ be the best-response value of player $i$, and define
\[
\alpha(x)=\max\{1,\max_i\pi^i(x)/b_i(x^{-i})\},
\]
with the conventions $0/0=1$ and $c/0=+\infty$ for $c>0$. If the $b_i$ are evaluated exactly, $x$ is an $\alpha(x)$-PNE whenever $\alpha(x)$ is finite; this is how RRR-BRD supplies the initial approximation factor for the bisection described next. All best responses in this evaluation must be computed against the same final profile $x$; values obtained earlier in a round, while the profile was still changing, cannot in general be reused for certification.

When an exact PNE need not exist, we search for the smallest approximation factor $\alpha^*=\inf\{\alpha\geq1:\text{an }\alpha\text{-PNE exists}\}$; since $\calX$ is finite, the infimum is attained whenever it is finite. Because costs are nonnegative, $\BR_i^{\alpha_1}(x^{-i})\subseteq\BR_i^{\alpha_2}(x^{-i})$ for $1\leq\alpha_1\leq\alpha_2$, so the set of $\alpha$-PNE is monotone in $\alpha$ and bisection on $\alpha$ is meaningful. Monotonicity also allows cuts to be carried across bisection levels, with one caveat: a CEI generated at $\alpha_2$ remains valid at every $\alpha_1\leq\alpha_2$ but not necessarily at larger $\alpha$, so what is retained is the generating deviation $\hat x^i$, from which~\eqref{eq:cei-lifted} is re-instantiated at the current $\alpha$. \citet{duguet2025branch2} exploit the same monotonicity.

The procedure, which we call \emph{$\alpha$-bisection search} (ABS), maintains certified bounds $\alpha_L\leq\alpha^*\leq\alpha_U$ together with an $\alpha_U$-PNE $x^*$. Initially $\alpha_L=1$, and RRR-BRD supplies $\alpha_U$ and $x^*$ whenever its final-profile evaluation yields a finite $\alpha(x)$. At each iteration, GZR is run at $\alpha=(\alpha_L+\alpha_U)/2$ with the retained deviations instantiated at $\alpha$. If GZR accepts an $\alpha$-PNE $x$, then $\alpha_U\gets\alpha$ and $x^*\gets x$; the incumbent need not be optimal for this update. If GZR terminates \textsc{Infeasible}, then $\alpha_L\gets\alpha$. If GZR ends \textsc{Unresolved}, neither certified bound changes; in this case our implementation raises the endpoint of a separate \emph{working interval} so that a new trial value can be selected, which allows the search to continue or stop without closing the certified interval. The search stops when $\alpha_U-\alpha_L\leq\varepsilon$ or the resource budget is exhausted, and returns $(\alpha_L,\alpha_U,x^*)$. If every test is resolved, the certified width halves at each iteration, so at most $\lceil\log_2((\alpha_U^{(0)}-\alpha_L^{(0)})/\varepsilon)\rceil$ GZR calls are needed. Inequalities proved only for exact equilibria, such as the strong VEST cut~\eqref{eq:vest-cut}, are omitted at $\alpha>1$.

\section{Applications}
\label{sec:games}

\subsection{Integer Splittable Bin Packing (ISBP)}
\label{subsec:isbp}

Integer-splittable bin packing (ISBP) games, introduced by \citet{hao2025integer},
model decentralized allocation problems in which self-interested
agents share capacitated resources and benefit from cost sharing, as in
logistics, inventory allocation, and distributed resource management.
They are variants of the classical selfish bin packing games of
\citet{bilo2006packing}. In an ISBP game, each player $i \in [n]$ controls an item of
integer size $w_i$ and decides how to split this item into integer units
across $m$ bins. Each bin $j \in [m]$ has integer capacity $u_j \in \Z^+$ and constant cost $c_j > 0$, shared proportionally among the players using the bin.
The strategy of player $i$ is an integer vector $x^i = (x^i_j)_{j \in [m]}$,
where $x^i_j$ denotes the number of units of item $i$ assigned to bin
$j$.
Let $x_j = \sum_{i \in [n]} x^i_j$ denote the total load on bin $j$,
and $x^{-i}_j = \sum_{k \neq i} x^k_j$ the load due to the opponents
of player $i$.
The allocation must assign all units of item $i$ and must not
violate the bin capacities.
Throughout, we use the convention that the cost contribution of an empty bin is zero, i.e., $c_j x^i_j/(x^{-i}_j + x^i_j) := 0$ whenever $x^i_j = 0$.
Given the opponents' loads $x^{-i}$, player
$i$ solves:
\begin{equation}
\label{eq:isbp-player}
\min_{x^i}
\left\{
\sum_{j \in [m]}
\frac{c_j}{x_j^{-i}+x^i_{j}}\,x^i_{j}
:
\sum_{j \in [m]} x^i_{j}=w_i,\;
x_j^{-i}+x^i_{j}\le u_j \ \forall j\in [m],\;
x^i_{j}\in\Z_{\ge 0}\ \forall j\in [m]
\right\}.
\end{equation}
An ISBP game instance is called \emph{uniform} if all bins have the same
capacity, and \emph{non-uniform} if bins may have different capacities.
Moreover, a uniform instance is called \emph{regular} if all bins have
the same cost and all items have the same size. Lemma~1 of \citet{hao2025integer} states that every PNE of a uniform instance occupies exactly $\lceil\sum_iw_i/u\rceil$ bins. In regular instances the bin costs are identical, so every PNE is socially optimal and the POS equals one whenever a PNE exists.
\citet{hao2025integer} also showed that computing a socially optimal state is NP-hard for
non-uniform instances. Since a single-player instance is a best-response problem, the non-uniform class contains NP-hard best-response problems as well.

\subsubsection{One-hot Encoding and Lifted Formulation}
\label{ss:isbp-form}

The cost in~\eqref{eq:isbp-player} is a ratio of decision variables, so we linearize it through a one-hot encoding of joint states. A binary variable $y^{j,t}_{i,s}$ indicates that player $i$ assigns $s$ units to bin $j$ while the total load of bin $j$ is $t$, and a binary $z^j_t$ indicates that the total load of bin $j$ is $t$. Consistency constraints select one state per player--bin pair and per bin, tie the selected total load to the sum of the individual contributions, and impose item sizes and capacities; the cost then becomes the affine function $\sum_{j,t,s}c_j(s/t)\,y^{j,t}_{i,s}$. The encoding is exact on binary $(y,z)$ and uses $O(nmuw)$ variables for $u=\max_ju_j$ and $w=\max_iw_i$; the explicit formulation is given in \S\ref{app: ISBP formulation}.

\subsubsection{CEI Encoding and Activation}
\label{ss:isbp-cei}

For a candidate deviation $\hat x^i$, the lifted CEI~\eqref{eq:supp-isbp-cei} has the incumbent one-hot cost on the left-hand side. On the right-hand side, the deviation cost on bin $j$ is $c_j\hat x^i_j/(t-s+\hat x^i_j)$, summed over the states $(s,t)$ for which the deviation fits, that is, $t-s+\hat x^i_j\le u_j$; the big-$M$ term is activated by the states of any bin used by the deviation for which it does not fit. For example, with $u_j=5$ and $\hat x^i_j=3$, the state $(s,t)=(2,4)$ leaves opponent load $t-s=2$, so the deviation fits and contributes $3c_j/5$ to the deviation cost, whereas the state $(1,4)$ leaves opponent load $3$, so the same deviation exceeds the capacity and activates the big-$M$ term. The total load $t$ alone cannot distinguish these two cases; it is the joint state $(s,t)$ that supplies the opponent load $t-s$, and this is why the encoding is indexed by both. Details are in \S\ref{app:ISBP CEI}.

\subsubsection{VEST in ISBP}
\label{sss:vest-isbp}

In an ISBP game, the cost of player $i$ on bin $j$ is
$f^i_j(x^i_j) = c_j\, x^i_j/(x^{-i}_j + x^i_j)$,
with the convention $f^i_j(0) = 0$, so that $f^i_j$ is well defined on $[0,\,u_j - x^{-i}_j]$ also when $x^{-i}_j = 0$.
When bin $j$ is shared ($x^{-i}_j > 0$), $f^i_j$ is strictly concave. When bin $j$ is solo ($x^{-i}_j = 0$), $f^i_j$ equals $c_j$ for all $x^i_j > 0$ and $0$ for $x^i_j = 0$: it is constant, hence affine, on the positive part of the interval, but not on the whole interval when both zero and positive usage are feasible.
Therefore, Assumption~\ref{ass:strictly-concave} does not hold in general for ISBP games, and Corollary~\ref{thm:strong-vest} does not apply directly.
We instead apply Corollary~\ref{thm:weak-vest-separable}. From the definitions of $S^i_+$ and
$S^i_0$, for every free bin $j\in\free(x^i)$,
\[
  j\in S^i_+(x^i)\iff x^{-i}_j>0\ \text{(shared free bin)},\qquad
  j\in S^i_0(x^i)\iff x^{-i}_j=0\ \text{(solo free bin)}.
\]
By Corollary~\ref{thm:weak-vest-separable}, every direction $d$
within the minimal face of $P^i(x^{-i})$ containing $x^i$ satisfies $d_j = 0$ for all shared free bins $j \in S^i_+(x^i)$.
Since $P^i(x^{-i})$ is a bounded simplex ($A^i = \mathbf{1}^\top$), such directions also satisfy $\sum_{j \in \free(x^i)} d_j = 0$, hence $\sum_{j \in S^i_0(x^i)} d_j = 0$. This yields the following corollary, which generalizes Lemma~2 of \citet{hao2025integer}, which assumes uniform capacities and identical bin costs, to arbitrary instances.

\begin{corollary}
\label{ISBP: c1}
Let $x$ be a PNE of an ISBP instance, allowing non-uniform capacities. If player $i \in [n]$
uses two unsaturated bins $j$ and $k$ in $x$, then neither $j$ nor $k$
is shared with any other player.
\end{corollary}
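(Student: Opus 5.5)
The plan is to obtain the statement as a direct consequence of Corollary~\ref{thm:weak-vest-separable}, applied to player $i$ with the opponents' loads $x^{-i}$ fixed. The key choice is a single exchange direction that moves one unit of weight between the two bins. First I will pin down the setting. For fixed $x^{-i}$, the conditional polytope is
\[
P^i(x^{-i})=\Bigl\{x^i\in\R^m:\textstyle\sum_j x^i_j=w_i,\ 0\le x^i_j\le u_j-x^{-i}_j\Bigr\},
\]
so $A^i=\mathbf 1^\top$ and $u_{ij}(x^{-i})=u_j-x^{-i}_j$. I will read ``player $i$ uses an unsaturated bin $j$'' as $x^i_j>0$ and $x_j<u_j$. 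This is exactly $0<x^i_j<u_{ij}(x^{-i})$, i.e.\ $j\in\free(x^i)$. So the hypothesis says $j,k\in\free(x^i)$ with $j\neq k$.

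Next I will verify the hypotheses of Corollary~\ref{thm:weak-vest-separable}.
\begin{itemize}
\item \emph{Integer vertices.} The single row $\mathbf 1^\top$ together with identity bounds is totally unimodular, and $w_i$ and $u_j-x^{-i}_j$ are integers. Hence Assumption~\ref{ass:integer-vertices} holds, as already noted after that assumption.
\item \emph{Separability.} The cost is $\sum_j f^i_j(x^i_j)$ with $f^i_j(s)=c_js/(x^{-i}_j+s)$ and $f^i_j(0)=0$.
\item \emph{Concavity, shared bins.} If $x^{-i}_j>0$, then $f^i_j$ is strictly concave on $[0,u_{ij}]$; it has negative second derivative.
\item \emph{Concavity, solo bins.} If $x^{-i}_j=0$, then $f^i_j$ equals $0$ at $0$ and $c_j$ on $(0,u_{ij}]$. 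This step function is concave, because for $t>0$ and $\lambda\in(0,1)$ we have $f(\lambda\cdot0+(1-\lambda)t)=c_j\ge(1-\lambda)c_j$, and the remaining cases are trivial.
\end{itemize}
Extending each $f^i_j$ to real arguments in this way is legitimate, since only integer points are strategies.

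Now take $d=e_j-e_k$. It satisfies $A^id=0$ and is supported on $\free(x^i)$. By the characterization preceding Theorem~\ref{thm:weak-vest}, $d$ is therefore a direction within the minimal face $F$ of $P^i(x^{-i})$ containing $x^i$. In particular $x^i$ is not a vertex, so Corollary~\ref{thm:weak-vest-separable} applies. It forces $d_\ell=0$ for every $\ell\in S^i_+(x^i)$. Since $d_j=1$ and $d_k=-1$, neither $j$ nor $k$ lies in $S^i_+(x^i)$. Both are free, so both lie in $S^i_0(x^i)$.

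It remains to identify $S^i_0$ with solo bins. A free bin with $x^{-i}_j>0$ has $f^i_j$ strictly concave near $x^i_j$, hence not affine there, and so lies in $S^i_+$. Therefore $j,k\in S^i_0$ gives $x^{-i}_j=x^{-i}_k=0$: neither bin is shared with another player. Nothing in the argument used uniform capacities or equal costs.

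The main point needing care is the boundary behaviour of solo bins. The function $f^i_j$ is discontinuous at $0$, so I must confirm two things. First, concavity still holds on the whole interval, so that the argument of Theorem~\ref{thm:weak-vest} goes through. Second, the partition $S^i_0/S^i_+$ is evaluated only at interior points, where the solo cost is locally constant. If this feels delicate, a self-contained fallback gives the same conclusion directly. Suppose, say, bin $j$ were shared. Moving a unit between $j$ and $k$ in either direction is feasible, because both bins are free. Strict concavity of $f^i_j$ plus concavity of $f^i_k$ then makes one of the two unit moves strictly improving, contradicting that $x$ is a PNE.
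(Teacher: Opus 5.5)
Your proof is correct and follows the paper's own argument: both bins are free, so $e_j-e_k$ is a direction within the minimal face, $x^i$ is not a vertex, and Corollary~\ref{thm:weak-vest-separable} rules out either bin lying in $S^i_+(x^i)$, i.e., being shared. Your explicit checks that the solo step function is concave and that shared-bin costs are strictly concave fill in details the paper leaves implicit. Your unit-exchange fallback is also valid, and it does not even need the integer-vertex assumption.
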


The resulting \emph{weak VEST inequalities} forbid a player from using a shared unsaturated bin whenever she uses another unsaturated bin; they are valid for every PNE, and their one-hot formulation is given in \S\ref{app:ISBP CEI}, inequalities~\eqref{eq:supp-isbp-wvest1}--\eqref{eq:supp-isbp-wvest2}.

Although Corollary~\ref{thm:strong-vest} does not apply to ISBP, in uniform instances every PNE can be transformed into a vertex PNE with the same individual costs.

\begin{theorem}
\label{ISBP: t1}
In any uniform ISBP instance, for every PNE $x$, there exists a PNE
$\tilde{x}$ such that $\tilde{x}^i$ is a vertex of
$P^i(\tilde{x}^{-i})$ for every $i\in[n]$, and
\begin{equation}
    \pi^i(\tilde{x})=\pi^i(x) \qquad \forall i\in[n].
\end{equation}
\end{theorem}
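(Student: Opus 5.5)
The plan is a constructive ``consolidation'' argument, applied one player at a time, that uses Corollary~\ref{ISBP: c1} to reduce the problem to a player's solo bins.

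\emph{Step 1: what a vertex looks like.} For ISBP, $P^i(x^{-i})=\{x^i:\sum_jx^i_j=w_i,\ 0\le x^i_j\le u-x^{-i}_j\}$ has the single equality row $\mathbf 1^\top$. A point of this polytope is a vertex exactly when $|\free(x^i)|\le1$, since two free coordinates give the direction $e_j-e_k$. So if $x^i$ is not a vertex, player $i$ uses at least two unsaturated bins. By Corollary~\ref{ISBP: c1}, every free bin of $i$ is then solo. Let $J$ be this set of free solo bins, with $k=|J|\ge2$ and $T=\sum_{j\in J}x^i_j$. Because all of them are unsaturated, $k\le T\le k(u-1)$.

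\emph{Step 2: a lower bound on $T$.} I would show $T>(k-1)u$. Otherwise the $T$ units fit into $k-1$ of the bins in $J$. These bins are solo and have the common capacity $u$, so this repacking is feasible against $x^{-i}$. It empties one bin and leaves the rest of $i$'s strategy unchanged, so it lowers $\pi^i$ by some $c_j>0$, contradicting that $x$ is a PNE. Uniformity is essential here, since it lets any $k-1$ bins of $J$ absorb the load.

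\emph{Step 3: the consolidation.} Redistribute the $T$ units so that $k-1$ bins of $J$ are filled to $u$ and one bin $j^\ast\in J$ holds $r=T-(k-1)u\in[1,u-1]$. Every bin of $J$ stays nonempty and solo, so $\pi^i$ is unchanged. All other players' loads and bins are untouched, so their costs are unchanged. Player $i$ now has at most one free coordinate, so her strategy is a vertex. Each application reduces $\sum_i|\free(x^i)|$ and changes only player $i$'s solo bins. A player who was already at a vertex keeps her free set, because her bins' loads are unchanged. Hence iterating over all players terminates with $\tilde x$ as claimed, provided equilibrium is preserved at each step.

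\emph{Step 4: preserving equilibrium (main obstacle).} Player $i$ herself stays best-responding: her available set is unchanged, since her bins $J$ remain solo with capacity $u$ and the other loads are fixed, and her cost is unchanged. The difficulty is an opponent $\ell$, whose deviation options on the bins of $J$ change. Before, these bins had loads $x^i_j$. After, they have loads $u$ (now unusable) and $r$ on $j^\ast$. Joining a bin with load $L$ with $y$ units costs $c_jy/(L+y)$, which decreases in $L$ but requires $y\le u-L$. Bin costs $c_j$ may also differ across $J$. I would first try to exploit the freedom in choosing $j^\ast$ (for instance, a bin of $J$ with the largest $c_j$, or the bin where $i$'s original load was smallest) together with the equilibrium conditions in $x$. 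The goal is to map any deviation $y^\ell$ against $\tilde x^{-\ell}$ to a deviation against $x^{-\ell}$ of no larger cost. The key facts would be: $\ell$ can put at most $u-r$ units into $J$ after consolidation; originally each $j\in J$ had spare room $u-x^i_j$ with $\sum_{j\in J}(u-x^i_j)=ku-T\ge u-r$; and $i$'s own indifference among bins of $J$ (from Step 2 and the PNE condition) constrains the $c_j$. If a direct mapping fails, I would instead show that any improving deviation for $\ell$ in $\tilde x$ forces an improving deviation for $\ell$ or for $i$ already in $x$, using Lemma~1 of \citet{hao2025integer} (the exact count $\lceil\sum_iw_i/u\rceil$ of occupied bins).
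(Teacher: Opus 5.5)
Your Steps 1--3 follow the paper's proof closely. Step~2 is exactly the paper's contradiction when the residual capacity $ku-T$ of $J$ is at least $u$, and your iteration is equivalent to its choice of a PNE with the fewest nonvertex players. But Step~4, which you rightly call the main obstacle, is only a list of strategies to try, so as written the proposal does not prove the theorem. Preserving equilibrium for the opponents is the real content of the result, and it needs a specific choice of $j^\ast$ together with a specific load bound.

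\emph{The choice of $j^\ast$.} Take $j^\ast$ to be a bin of maximum cost in $J$; the choice matters. For example, let $u=10$, $x^i=(9,5)$ on $J=\{1,2\}$ and $c_1=2c_2$. Consolidating onto the smaller-load bin~$2$ lets an opponent put $6$ units there at cost $0.6c_2$. The only way to place $6$ units on $J$ at $x$ costs $c_1/10+5c_2/10=0.7c_2$, so your proposed bin-by-bin mapping fails for that choice.

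\emph{The load bound.} With the max-cost choice, after consolidation an opponent $\ell$ can enter $J$ only through $j^\ast$, since the other bins are saturated. It does so with some $q\le u-r$ units at cost $c_{j^\ast}q/(r+q)$. At $x$, spread the same $q$ units over $J$ in any way with $q_j\le a_j:=u-x^i_j$. This is possible because $\sum_{j\in J}a_j=ku-T=u-r\ge q$; note this is an equality, not just the inequality you state. The slacks $a_j-q_j$ are nonnegative and sum to $u-r-q$, so each is at most $u-r-q$. Hence every bin of $J$ ends with load at least $r+q$. Using $c_j\le c_{j^\ast}$, the cost on $J$ is at most $\sum_j c_{j^\ast}q_j/(r+q)=c_{j^\ast}q/(r+q)$.

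\emph{Conclusion.} Keep $\ell$'s allocation outside $J$ unchanged. Those loads did not move, and $\ell$ used no bin of $J$, so the mapped deviation is feasible at $x$ and $\pi^\ell$ is the same in both profiles. Therefore any profitable deviation after consolidation yields one at $x$, a contradiction. With this argument, your fallback via Lemma~1 of \citet{hao2025integer} is unnecessary.
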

\begin{proof}
Let $u$ be the common capacity. For an ISBP profile $y$, the free coordinates of player $i$ are the bins $j$ with $y^i_j>0$ and $y_j<u$, and since $P^i(y^{-i})$ is a simplex, $y^i$ is a vertex of $P^i(y^{-i})$ if and only if $i$ has at most one free coordinate (Lemma~\ref{lem:simplex-vertex}). Among the PNEs with the same cost vector as $x$, choose $\tilde x$ minimizing the number $p(\tilde x)$ of players whose strategy is not a vertex; we show that $p(\tilde x)=0$. Suppose not, and let $i$ be a nonvertex player, so that its set $S$ of free coordinates has at least two elements. By Corollary~\ref{ISBP: c1}, every bin in $S$ is solo.

Let $r:=\sum_{j\in S}(u-\tilde x^i_j)>0$ be the residual capacity of $S$, and let $j^*\in S$ have maximum cost. If $r\geq u$, then $\sum_{j\in S\setminus\{j^*\}}(u-\tilde x^i_j)=r-(u-\tilde x^i_{j^*})\geq\tilde x^i_{j^*}$, so player $i$ could move all units from $j^*$ to the other bins of $S$, which remain solo, and reduce her cost by $c_{j^*}>0$, contradicting equilibrium. Hence $0<r<u$. Define $x'$ by $x'^{\,i}_j=u$ for $j\in S\setminus\{j^*\}$, $x'^{\,i}_{j^*}=u-r$, and $x'=\tilde x$ otherwise. Since $\sum_{j\in S}\tilde x^i_j=|S|u-r$, $x'^{\,i}$ is feasible; its only free coordinate is $j^*$, so it is a vertex; and all bins in $S$ remain positive and solo, so $\pi^i(x')=\pi^i(\tilde x)$ and, as $\tilde x^{-i}$ is unchanged, $x'^{\,i}$ is still a best response.

Now fix $h\neq i$. Player $h$ uses no bin in $S$, so $\pi^h(x')=\pi^h(\tilde x)$. Suppose $\hat x^h$ is a profitable deviation from $x'$. If it uses no bin in $S$, it is profitable from $\tilde x$ as well. Otherwise it places $q\leq r$ units in $S$, all on $j^*$ since the other bins in $S$ are saturated at $x'$, at cost $c_{j^*}q/(u-r+q)$. Distributing these $q$ units among the bins of $S$ at $\tilde x$, whose residual capacity is $r$, leaves every used bin with load at least $u-r+q$; since $c_j\leq c_{j^*}$ on $S$, the resulting deviation costs at most $c_{j^*}q/(u-r+q)$ on $S$ and the same as $\hat x^h$ elsewhere, so it is profitable from $\tilde x$. Both cases contradict that $\tilde x$ is a PNE, so $x'$ is a PNE with the same cost vector as $\tilde x$.

Finally, every $h\neq i$ has the same free coordinates at $x'$ and $\tilde x$, so $p(x')=p(\tilde x)-1$, contradicting the choice of $\tilde x$.
\end{proof}

\paragraph{Strong VEST cut.}
By Theorem~\ref{ISBP: t1}, in uniform instances some optimal PNE has every player using at most one unsaturated bin, so we can add the following cut to the GZR master problem:
\begin{equation}
\label{eq:vest-cut}
\sum_{j \in [m]} \sum_{t=1}^{u_j-1} \sum_{s=1}^{\min\{t,w_i\}}
y^{j,t}_{i,s} \;\le\; 1 \qquad \forall\, i \in [n],
\end{equation}
which enforces that each player uses at most one unsaturated bin.
Unlike the weak VEST inequalities, inequality~\eqref{eq:vest-cut} is optimality-preserving rather than valid for every PNE: it may cut off PNE profiles, but by Theorem~\ref{ISBP: t1} some PNE with the same individual (hence social) costs satisfies it. It therefore preserves objectives that depend only on player costs and should not be imposed under arbitrary allocation objectives.

\subsection{Network Formation Games (NFG)}
\label{subsec:nfg}

\subsubsection{Game Definition}
Network formation is a standard cost-sharing benchmark: \citet{dragotto2023zero} evaluate Zero-Regret on a game in the style of \citet{chen2009network}, where each player routes one unit along a single source--sink path and each edge cost is split equally among its users, as in the uncapacitated network design game of \citet{anshelevich2008price}. That model is uncapacitated, hence an IPG rather than a GIPG. We therefore add edge capacities, which couple the feasible sets, and let each player split an integer demand across paths, with costs shared in proportion to flow. The classical potential-function and price-of-stability results do not survive these changes: an NFG on parallel edges between a common source and sink is an ISBP game, so the model contains instances without a PNE.

We restrict attention to directed acyclic networks, as in the layered networks studied computationally. Every feasible nonnegative flow then decomposes into source--sink paths, so $x^i_e\leq D_i$. This justifies the individual-flow ranges in the one-hot formulation. A model allowing directed circulations would require larger state ranges.

A capacitated NFG consists of a directed acyclic network $G = (V, E)$ with edge
capacities $u_e \in \Z_+$ and edge costs $c_e > 0$, and players
$[n]$ each with origin-destination pair $(o_i, d_i)$ and integer demand
$D_i \in \Z_+$. Player $i$ routes $D_i$ units of integer-splittable
flow from $o_i$ to $d_i$; let $x^i_e \in \Z_{\ge 0}$ denote the flow
of player $i$ on edge $e$, and $x_e = \sum_k x^k_e$ the total flow.
Under proportional flow-based cost sharing, player $i$'s cost is:
\[
  \pi^i(x) \;=\; \sum_{e \in E} c_e \cdot \frac{x^i_e}{x_e},
\]
with the convention $c_e x^i_e / x_e = 0$ whenever $x^i_e = 0$.

Given opponents' flows, player $i$ minimizes $\sum_{e\in E}c_e x^i_e/(x^{-i}_e+x^i_e)$ over integer $o_i$--$d_i$ flows of value $D_i$ satisfying $x^{-i}_e+x^i_e\le u_e$ for every edge.

\subsubsection{One-hot Encoding and Lifted Formulation}
\label{ss:nfg-form}

The cost structure is identical to ISBP, with edges replacing bins and demand replacing weight. We reuse the one-hot consistency and lifted-cost construction described in Section~\ref{ss:isbp-form}, together with node-balance equations. Against fixed $x^{-i}$, separation is a separable concave-cost integer-flow problem with per-edge cost $c_e s/(x^{-i}_e+s)$ and residual capacity $u_e-x^{-i}_e$. The detailed formulation is in \S\ref{app: NFG formulation}.

\subsubsection{CEI Encoding and Activation}
\label{ss:nfg-cei}

The NFG CEI is the edge-indexed analogue of the ISBP construction: its activation term is triggered when the deviation exceeds the residual capacity of an edge it uses. \S\ref{app: NFG CEI} gives the explicit formula~\eqref{eq:supp-nfg-cei}.

\subsubsection{VEST in NFG}
\label{sss:vest-nfg}

At the edge level the cost has the same curvature structure as in ISBP: shared free edges are strictly concave coordinates and solo free edges are locally affine. Directions within the minimal face of the conditional flow polytope are circulations supported on the free-edge subgraph (directed acyclicity of $G$ does not preclude undirected cycles, traversed with signs). Corollary~\ref{thm:weak-vest-separable} therefore implies that at a PNE every cycle of the undirected free-edge subgraph of player $i$ consists entirely of solo edges; equivalently, every shared free edge is a bridge of that subgraph. The cycle inequalities~\eqref{eq:supp-nfg-vest} in \S\ref{app: NFG CEI} express this condition and are valid for every cycle. Our implementation adds them a priori for a fixed fundamental cycle basis, which yields a valid subfamily but does not enforce the full condition; enforcing it completely would require separating violated cycles in the incumbent's free-edge subgraph.

\subsection{Generalized Knapsack Games (GKG and GKGB)}
\label{subsec:gkg}

\subsubsection{Game Definitions}

Generalized knapsack games are the class on which \citet{duguet2025branch1} evaluate their method, and we use them in two forms: the linear game GKG below, and GKGB, which adds the reciprocally bilinear interactions of \citet{dragotto2023zero} and is not a potential game.

\begin{definition}[Generalized Knapsack Game]
\label{def:gkg}
A GKG consists of $n$ players and $m$ items.
Player $i$ has a private $0$-$1$ knapsack with profits $p_{ij} \geq 0$, weights $w_{ij}$, and capacity $b_i$: $\calX^i_{\mathrm{private}} = \{x^i \in \{0,1\}^m : \sum_j w_{ij} x^i_j \leq b_i\}$.
Items have shared capacities: $\sum_i x^i_j \leq c_j$ for all $j$, where $c_j \in \{1, \ldots, n\}$.
Player $i$ minimizes $\pi^i(x) = P_i - \sum_j p_{ij} x^i_j$, where $P_i := \sum_j p_{ij}$ ensures $\pi^i \geq 0$.
\end{definition}

Since each player's cost depends only on her own variables, a jointly feasible unilateral deviation changes the social cost $F(x)=\sum_i\pi^i(x)$ by exactly the deviator's cost change; hence $F$ is an exact potential, every social optimum is a PNE, and the POS equals one whenever the minimum social cost is positive. GKG is therefore an exact potential game, and its socially optimal PNE can also be obtained directly by a binary MILP that maximizes total profit subject to the private knapsack and shared capacity constraints. Performance on this family alone does not isolate the value of CEI separation for equilibrium selection in non-potential games, a point we return to in Section~\ref{subsec:comp:duguet}; GKGB and the cost-sharing games serve that purpose. GKG nevertheless allows a comparison with an existing algorithm on a class where the optimal equilibrium is known, and it shows how quickly CEI separation reaches it, how many cuts it needs, and how this scales in the number of players, which we examine up to $n=50$.

GKGB extends GKG with \emph{reciprocally bilinear interactions} from \citet{dragotto2023zero}.
In addition to the private knapsack profits, each player's cost depends on opponents' item selections through pairwise interactions.

\begin{definition}[Generalized Knapsack Game with Bilinear Costs]
\label{def:gkgb}
A GKGB is a GKG (Definition~\ref{def:gkg}) in which each player~$i$'s cost includes bilinear interaction terms:
\[
\pi^i(x) = P_i - \sum_j p_{ij} x^i_j - \sum_{k \neq i} \sum_j C^i_{kj}\, x^i_j\, x^k_j,
\]
where $C^i_{kj} \in \R$ captures the externality that player~$k$'s selection of item~$j$ imposes on player~$i$.
Positive coefficients represent synergistic effects; negative coefficients represent adversarial externalities.
In GKGB the offset is enlarged relative to GKG,
\[
P_i \;:=\; \sum_j p_{ij} \;+\; \sum_{k \neq i} \sum_j \max\{C^i_{kj},\, 0\},
\]
which ensures $\pi^i \geq 0$ also in the presence of positive interaction coefficients.
\end{definition}

The offset $P_i$ is a constant for each player, so it changes neither exact best responses nor exact equilibria. It does change multiplicative approximation factors and cost-based welfare ratios, and it is therefore part of the game definition for these quantities. Nonnegativity of the costs is required by Definition~\ref{def:alpha-pne} and by the monotonicity of $\alpha$-PNE in $\alpha$ (Section~\ref{sec:tas}).

Neither GKG nor GKGB requires a one-hot encoding of resource loads, in contrast with ISBP and NFG, where the proportional cost-sharing term makes one necessary (Sections~\ref{ss:isbp-form} and~\ref{ss:nfg-form}). Binary player--item variables represent strategies, binary fullness indicators encode deviation feasibility, and in GKGB McCormick variables linearize the bilinear terms.

\subsubsection{CEI Encoding}

Because GKG and GKGB use binary player--item variables, their CEIs require no one-hot lift.
For a fixed deviation, binary fullness indicators identify whether the selected items have sufficient residual capacity.
The GKG deviation cost is affine; in GKGB, fixing the deviation makes its opponent-interaction term affine, while standard McCormick variables linearize the incumbent bilinear terms.
\S\ref{app:gkg-cei} gives the indicator system, CEIs, and big-$M$ values.

\section{Computational Study}
\label{sec:experiments}

\subsection{Experimental Design}
\label{subsec:exp-design}

We evaluate GZR on the four game classes, with up to two experiments per class. The experiments have three aims: to assess whether GZR computes socially optimal equilibria and certifies nonexistence within a practical time budget; to assess how tight the certified bounds produced by $\alpha$-bisection are where no exact PNE exists; and to measure the effect of the structural cuts.

\textbf{Instances.}
We test on 842 regular and 842 non-regular ISBP instances (all the enumerated tuples with $n \in \{4,\ldots,10\}$ players, $m \in \{n-2,\ldots,n+3\}$ bins, and item weight $w \in \{4,\ldots,15\}$, with the capacity $u$ derived from divisibility conditions subject to $mu - nw \geq 0$; heterogeneous weights, capacities, and costs for the non-regular family), 128 NFG instances on layered networks under tight and loose capacity regimes, the $1{,}260$ GKG benchmark instances published by \citet{duguet2025branch1} plus a 156-instance scalability study of our own with up to $n = 50$ players, and 72 GKGB instances (Types B and C of \citealt{dragotto2023zero}, plus an interpolating Type BC), with shared capacities added throughout.
Full generation details appear in \S\ref{app:instance-generation}.

\textbf{Experiment~1: Best PNE and Price of Stability.}
We first run RRR-BRD (Section~\ref{sec:tas}) with up to $R = 20$ rounds of sequential best-response updates per restart, visiting players in uniformly random order.
If some round produces a profile with no improving deviation for any player, that profile is certified as an exact PNE and the heuristic stops; otherwise, among the end-of-round profiles, the one with the smallest approximation factor $\alpha(x)$ is selected.
This candidate warm-starts GZR, which minimizes social cost over the $\alpha$-PNE set; dividing by the social optimum gives the POS.

\textbf{Experiment~2: $\alpha$-Bisection Search (ABS).}
For instances where Experiment~1 finds no exact PNE---GZR returns \textsc{Infeasible} or times out without confirming one---ABS (Section~\ref{sec:tas}) searches for the smallest certified approximation factor, using the RRR-BRD value $\alpha(x)$ as the initial upper bound and a per-call GZR limit of 10 minutes (600\,s). It is invoked only for ISBP and GKGB, because Experiment~1 finds a PNE on every GKG and NFG instance.
The bisection stops at width $10^{-3}$ for ISBP and $10^{-2}$ for GKGB, the looser value reflecting the higher cost of each GZR call on the McCormick-linearized GKGB formulation. Independently of the stopping rule, a certified interval $[\alpha^{\mathrm{LB}},\alpha^{\mathrm{UB}}]$ is reported as \emph{verified} when its width is below $\varepsilon=10^{-2}$; this reporting threshold gives the Verified column the same meaning for both games, and both endpoints are certified regardless of it. The strong VEST cut~\eqref{eq:vest-cut} and the player-anchoring constraint (\S\ref{app:symmetry}) are valid only for exact equilibria and are therefore imposed only in the initial test at $\alpha=1$, where Remark~\ref{rmk:existence-preserving} shows that an \textsc{Infeasible} outcome remains a valid nonexistence certificate; every call at $\alpha>1$ omits them, whereas the bin-load ordering constraint is valid at every $\alpha$ and is kept throughout. Deviations generated across iterations are carried over (Section~\ref{sec:tas}).

\textbf{Implementation.}
Our implementation, instance generators, and scripts reproducing all tables are publicly available at \url{https://github.com/HyunwooLee0429/GIPG} and will be deposited with a DOI upon acceptance, in accordance with the journal's software and data policy.

All experiments are implemented in Python using Gurobi~13.0 \citep{gurobi} with lazy constraint callbacks for CEI separation, 16 threads for both the master and the best-response subproblems, and a GZR time limit of 10 minutes (600\,s) per instance; generation and restart seeds are fixed and recorded per instance in the repository, each instance run once, on a single machine with an Intel Core Ultra~7 265F processor (2.40\,GHz) and 32\,GB RAM.

\textbf{Solver settings.}
The master problem is solved with \texttt{LazyConstraints}~$=1$ and the social-optimum reference model, which needs no separation, with \texttt{LazyConstraints}~$=0$; all tolerances are Gurobi defaults (\texttt{MIPGap}~$=10^{-4}$, \texttt{IntFeasTol}~$=10^{-5}$, \texttt{FeasibilityTol}~$=10^{-6}$). A status of \textsc{Optimal} therefore certifies optimality within the default gap rather than in exact arithmetic; the largest terminal gap among \textsc{Optimal} instances is $1.7\times10^{-5}$ for ISBP and $10^{-4}$ for GKG and GKGB. Best-response subproblems are solved to optimality in milliseconds, and a violation is declared when the incumbent's regret exceeds $10^{-9}$. A CEI generated by a feasible deviation is valid whether or not that deviation is an exact best response, so inexact best responses can only weaken the acceptance test, never produce invalid cuts. The nonexistence certificates rest on the solver's infeasibility proofs at these tolerances and were not re-verified in exact arithmetic; for $(n,m,w,u)=(5,4,5,8)$, nonexistence is also known analytically \citep{hao2025integer}.

\textbf{Performance metrics.}
Tables report instance counts; PNEs found by BRD and GZR; the average BRD approximation ratio $\alpha_{\mathrm{brd}}$; average GZR wall-clock time, POS (best-PNE cost divided by social-optimum cost), and number of separated CEIs; and the numbers ending as \textsc{Optimal}, \textsc{Infeasible}, or \textsc{Time}\allowbreak\_\allowbreak\textsc{Limit}; the last is the \textsc{Unresolved} status of Algorithm~\ref{alg:gzr}, which in our experiments always results from the time limit.
POS is averaged only over instances on which GZR returned a PNE, so rows with time-limit terminations condition on the easier instances; $\alpha_{\mathrm{brd}}$ is averaged over all instances, using the best end-of-round profile of each.
Interpreting a ratio as the POS requires both a certified equilibrium and optimality certificates for both terms. The social-optimum reference model solved to optimality on all but 23 of the $3{,}300$ instances (one non-regular ISBP, two NFG, and 20 GKG scalability instances); on those, the incumbent reference value is used, and the resulting ratio is neither a guaranteed upper nor lower bound on the true POS.

Table~\ref{tab:result-summary} collects the principal aggregate results; the explicit application-specific CEIs and VEST inequalities are in \S\ref{app:explicit-formulations}, and the full computational files are available in the repository.
\begin{table}[t]
\caption{Aggregate results (times in seconds, averaged). OPT/INF/TL count instances ending \textsc{Optimal}, \textsc{Infeasible}, and \textsc{Time\_Limit}. The GKG benchmark row uses the Experiment~1 protocol (warm start, 16 threads, optimality certified); the single-threaded first-equilibrium comparison with \citet{duguet2025branch1} is reported separately in Section~\ref{subsec:comp:duguet} and Table~\ref{tab:gkg-benchmark-cells}. POS averages for ISBP non-regular, NFG, and GKG (scalability) include 1, 2, and 20 incumbent-reference instances (Section~\ref{subsec:exp-design}).}
\label{tab:result-summary}
\centering
\small
\begin{tabular}{lrrrrrrrr}
\toprule
Game/group & Count & BRD PNE & GZR PNE & $\alpha_{\mathrm{brd}}$ & $T_{\rm GZR}$ & POS & $\#_{\mathrm{CEI}}$ & OPT/INF/TL\\
\midrule
ISBP regular & 842 & 819 & 820 & 1.001 & 8.4 & 1.000 & 89.6 & 818/15/9\\
ISBP non-regular & 842 & 841 & 842 & 1.000 & 6.1 & 1.000 & 19.5 & 839/0/3\\
NFG loose & 64 & 64 & 64 & 1.000 & 37.4 & 1.005 & 0.7 & 63/0/1\\
NFG tight & 64 & 64 & 64 & 1.000 & 94.9 & 1.015 & 3.0 & 58/0/6\\
GKG (benchmark) & 1260 & 1260 & 1260 & 1.000 & 0.15 & 1.000 & 5.6 & 1260/0/0\\
GKG (scalability) & 156 & 156 & 156 & 1.000 & 126.2 & 1.000 & 90.9 & 129/0/27\\
GKGB Type B & 24 & 24 & 24 & 1.000 & 68.0 & 1.012 & 294.5 & 22/0/2\\
GKGB Type BC & 24 & 15 & 16 & 1.002 & 386.4 & 1.057 & 4111.1 & 9/0/15\\
GKGB Type C & 24 & 8 & 11 & 1.007 & 386.6 & 1.072 & 4015.7 & 8/1/15\\
\bottomrule
\end{tabular}
\end{table}

\subsection{Results}
\label{subsec:comp:results}
Table~\ref{tab:result-summary} summarizes Experiment~1.
For ISBP, GZR finds a PNE in 820 of 842 regular and in all 842 non-regular instances, with average solve times below nine seconds, and certifies nonexistence on 15 regular instances; the 22 regular instances on which no exact PNE was found (15 certified nonexistent and 7 unresolved at the time limit) proceed to ABS.
Median times are far below the means (regular/non-regular ISBP 0.2/0.3\,s, NFG 1.1/2.9\,s, GKG scalability 8.5\,s, benchmark 0.04\,s), reflecting heavy right tails from the time-limit instances.
On regular instances the master problem is strengthened by the symmetry-breaking constraints of \S\ref{app:symmetry}: player anchoring requires $w \leq u$ and is imposed on exactly the 627 of 842 instances satisfying it; bin-load ordering and the VEST cut~\eqref{eq:vest-cut} are applied throughout.

RRR-BRD and GZR find a PNE in all 128 NFG instances; average GZR time rises from 37.4 seconds under loose capacities to 94.9 seconds under tight capacities, while the average POS remains small, at 1.005 and 1.015 respectively.
In the 156-instance GKG scalability study through 50 players, GZR finds a PNE in every instance with POS $1.000$ throughout; difficulty grows sharply with weight--profit correlation (average times of 3.3, 72.3, and 303.0 seconds across the three correlation levels), and the 27 time-limit terminations lack only an optimality certificate.
GKGB is the hardest class, and its difficulty increases with the share of adversarial (negative) interaction coefficients: GZR finds a PNE in all 24 Type~B, 16 of 24 Type~BC, and 11 of 24 Type~C instances, with average POS among the equilibria found of 1.012, 1.057, and 1.072. This is consistent with \citet{dragotto2023zero}, who observe that mixed-sign interactions both hinder computation and can preclude existence.

\subsection{Approximate Equilibria}
\label{subsec:comp:abs}
All 22 ISBP instances entering ABS are regular. We recall that neither a time-limited GZR run nor cycling of best-response dynamics establishes nonexistence; the 15 \textsc{Infeasible} outcomes at $\alpha=1$ do, and for $(5,4,5,8)$ nonexistence was also established analytically by \citet{hao2025integer}.
Table~\ref{tab:isbp-tas} in \S\ref{app:abs-certificates} reports the instance-level results (Experiment~1 status, initial bound $\alpha_{\mathrm{init}}$, certified bounds, verification at $\varepsilon = 10^{-2}$, iterations, and end-to-end time, which includes the warm start, the $\alpha = 1$ test, and the final lower-bound call and may therefore exceed $\mathrm{Iters} \times 600$\,s).

Of the 22 instances, 7 are fully verified. Of the remaining 15, seven lack a certified lower bound entirely, because the lower-bound GZR calls time out, particularly at larger item sizes ($w \geq 11$), and eight close to certified widths between $0.013$ and $0.037$, just outside the threshold. Lower bounds are harder to certify than upper bounds because the structural cuts are dropped once $\alpha$ exceeds one (Section~\ref{subsec:exp-design}), so infeasibility proofs at $\alpha>1$ are obtained on a weaker master.
The tightest certified upper bounds range from $\alpha^{\mathrm{UB}} = 1.009$ to $1.081$, with most below $1.05$.
ABS performs 5--7 bisection iterations per instance, but on the hard instances most are time-limit terminations that advance the working interval without producing any certificate (Section~\ref{sec:tas}): on $(7,6,8,11)$, seven iterations and $3{,}629$ seconds---roughly $518$ seconds per call against the 600-second limit---move the certified upper bound only once, from $1.073$ to $1.037$, and certify no new lower bound.
Iteration counts should therefore not be read as convergence; the certified interval widths are the meaningful summary.
For GKGB, 21 instances (8 Type~BC, 13 Type~C) enter ABS; 6 are fully verified, the rest lack a certified lower bound, and the certified upper bounds range from $1.000$ to $1.034$. Five of the six are verified because the larger warm start of ABS found an exact PNE that Experiment~1 had missed, which raises the number of Type~BC and Type~C instances with a known PNE to 20 and 12 of 24.
Approximate equilibria within a few percent of exact, with certificates of their distance from the best achievable factor, are thus attainable even without an exact PNE.

\subsection{Comparison with Prior Intersection-Cut Methods}
\label{subsec:comp:duguet}
We run GZR on the $1{,}260$ generalized knapsack instances published by \citet{duguet2025branch1}, so that both methods are evaluated on identical data. Their branch-and-cut looks for one equilibrium or a proof that none exists, so we match that task: GZR is given no RRR-BRD warm start and stops at the first pure equilibrium, and, as their experiments use a single core, we run it on one thread.

Under a ten-minute limit GZR finds a PNE on all $1{,}260$ instances, averaging $0.066$ seconds and $22.9$ separated CEIs. Under a one-hour limit their method solves $471$, averaging $2{,}337$ seconds and $41{,}275$ cuts (per-cell counts, hardware, solvers, and limits are in Table~\ref{tab:gkg-benchmark-cells}). On the $471$ instances both resolve, the median ratio of their time to ours is $139$; over all instances the median ratio of cut counts is $1{,}325$.

The cut counts differ even more. \citet{duguet2025branch1} attribute their growth in the generalized setting to the local validity of the intersection cuts; the maximum on these instances is $233{,}309$. CEIs are globally valid by Proposition~\ref{prop:cei}, so each separated inequality is kept for the rest of the search, and GZR needed at most $172$ on these instances. The two counts are not directly comparable, since intersection cuts are separated at fractional node solutions and CEIs at integer candidates; the difference is that a locally valid cut is discarded when its subtree closes, whereas a CEI never is.

Optimizing over the equilibrium set is a task their method does not address: with the warm start and the master solved to gap zero, GZR returns the socially optimal PNE on all $1{,}260$ instances in $0.15$ seconds on average, at a price of stability of $1.000$. One caveat remains: as noted in Section~\ref{subsec:gkg}, a GKG social optimum is automatically a PNE, so this family measures the overhead of separating redundant equilibrium constraints (the reference MILP averages $0.06$ seconds against $0.15$ for GZR), while equilibrium selection is tested on ISBP, NFG, and GKGB.

\subsection{Impact of Structural Cuts}
\label{subsec:comp:vest}
Table~\ref{tab:vest-comparison} (\S\ref{app:vest-impact}) compares CEI--GZR configurations with and without the VEST-based restrictions on matched instances: on regular ISBP, the strong VEST cut~\eqref{eq:vest-cut} reduces average time from 13.0 to 8.4 seconds---a $1.55\times$ aggregate speedup---and time-limit terminations from 16 to 9, with the largest gains on the hardest instances.
The weak VEST inequalities, in contrast, do not pay off on these families: on non-regular ISBP the aggregate speedup is 0.888 with an essentially neutral per-instance ratio of 1.001, and on NFG both metrics favor the configuration without cuts (0.923 and 0.865). The NFG cycle inequalities are a large static family added a priori (\S\ref{app: NFG CEI}) and are carried whether or not they prune, which explains the overhead.

The aggregate speedup compares total runtimes, and the per-instance speedup is the mean of the off/on runtime ratios over instances solved to optimality in both configurations. Since the two configurations use independent RRR-BRD warm starts, these figures do not fully separate the effect of the restrictions from variation in initialization; they support the family-specific conclusion that the optimality-preserving strong VEST cut helps on regular ISBP, whereas the weaker valid inequalities do not.

\section{Conclusion}
\label{sec:conclusion}
We have introduced conditional equilibrium inequalities, an affine representation of the equilibrium conditions of finite integer games with shared constraints whose activation terms distinguish feasible from infeasible deviations. Since incumbent and deviation costs are expressed in the same master variables, the inequalities are globally valid and can be separated by best-response computations. The resulting GZR algorithm optimizes a linear objective over exact or approximate equilibria, and the accompanying bisection procedure produces certified bounds on the smallest achievable approximation factor.

The structural results play a complementary role. Concavity of the costs and integrality of the conditional polytopes imply that each player's cost is constant on the minimal face containing her equilibrium strategy, yielding vertex restrictions under strict concavity and coordinate restrictions under separability; in uniform ISBP a cost-preserving transformation justifies the vertex restriction even though solo-bin costs are not strictly concave. They also clarify which inequalities preserve every equilibrium and which preserve only an optimal one, a distinction that matters whenever nonexistence or optimality is certified.

The computational study shows that GZR computes socially optimal equilibria and certifies nonexistence across cost-sharing and knapsack games, and that added structure does not automatically improve running times: the strong VEST cut helps on regular ISBP, whereas the weaker valid inequalities do not. The 16 certified nonexistence results, 15 of them for regular bin packing instances, are to our knowledge the first such certificates obtained computationally for these games. For the cost-sharing games, the size of the one-hot lift limits the capacities and demands that can be handled directly; reducing it, separating rather than enumerating the cycle inequalities, and equilibrium-aware branching rules are natural directions for future work.

\bibliography{GIPG}

\begin{thebibliography}{27}
\providecommand{\natexlab}[1]{#1}
\providecommand{\url}[1]{\texttt{#1}}
\expandafter\ifx\csname urlstyle\endcsname\relax
  \providecommand{\doi}[1]{doi: #1}\else
  \providecommand{\doi}{doi: \begingroup \urlstyle{rm}\Url}\fi

\bibitem[Anshelevich et~al.(2008)Anshelevich, Dasgupta, Kleinberg, Tardos, Wexler, and Roughgarden]{anshelevich2008price}
Elliot Anshelevich, Anirban Dasgupta, Jon Kleinberg, {\'E}va Tardos, Tom Wexler, and Tim Roughgarden.
\newblock The price of stability for network design with fair cost allocation.
\newblock \emph{SIAM Journal on Computing}, 38\penalty0 (4):\penalty0 1602--1623, 2008.

\bibitem[Bil{\`o}(2006)]{bilo2006packing}
Vittorio Bil{\`o}.
\newblock On the packing of selfish items.
\newblock In \emph{Proceedings of the 20th IEEE International Parallel \& Distributed Processing Symposium}, 2006.

\bibitem[Carvalho(2016)]{carvalho2016computation}
M.~Carvalho.
\newblock \emph{Computation of equilibria on integer programming games}.
\newblock PhD thesis, Universidade do Porto, Faculdade de Ci{\^e}ncias, 2016.

\bibitem[Carvalho et~al.(2022)Carvalho, Lodi, and Pedroso]{carvalho2022computing}
M.~Carvalho, A.~Lodi, and J.~Pedroso.
\newblock Computing equilibria for integer programming games.
\newblock \emph{European Journal of Operational Research}, page S0377221722002727, March 2022.
\newblock ISSN 03772217.

\bibitem[Carvalho et~al.(2018)Carvalho, Lodi, and Pedroso]{carvalho2018complexity}
Margarida Carvalho, Andrea Lodi, and Jo{\~a}o~Pedro Pedroso.
\newblock Existence of {N}ash equilibria on integer programming games.
\newblock In \emph{Operational Research}, pages 11--23. Springer, 2018.

\bibitem[Carvalho et~al.(2023)Carvalho, Dragotto, Lodi, and Sankaranarayanan]{carvalho2023integer}
Margarida Carvalho, Gabriele Dragotto, Andrea Lodi, and Sriram Sankaranarayanan.
\newblock Integer {P}rogramming {G}ames: {A} {G}entle {C}omputational {O}verview.
\newblock In \emph{Tutorials in Operations Research: Advancing the Frontiers of OR/MS: From Methodologies to Applications}, pages 31--51. INFORMS, 2023.

\bibitem[Carvalho et~al.(2026)Carvalho, Dragotto, Lodi, and Sankaranarayanan]{carvalho2026cutandplay}
Margarida Carvalho, Gabriele Dragotto, Andrea Lodi, and Sriram Sankaranarayanan.
\newblock The {Cut-and-Play} algorithm: Computing {Nash} equilibria via outer approximations.
\newblock \emph{Operations Research}, 74\penalty0 (2):\penalty0 1070--1086, 2026.
\newblock Published online July 8, 2025.

\bibitem[Chassein et~al.(2016)Chassein, Krumke, and Thielen]{Chassein2016}
Andr\'{e} Chassein, Sven~O. Krumke, and Clemens Thielen.
\newblock Capacitated network design games with weighted players.
\newblock \emph{Networks.}, 68\penalty0 (2):\penalty0 141–158, September 2016.
\newblock ISSN 0028-3045.

\bibitem[Chen and Roughgarden(2009)]{chen2009network}
Ho-Lin Chen and Tim Roughgarden.
\newblock Network design with weighted players.
\newblock In \emph{Theory of Computing Systems}, volume~45, pages 302--324, 2009.

\bibitem[Cr{\"o}nert and Minner(2024)]{cronert2024equilibrium}
Tobias Cr{\"o}nert and Stefan Minner.
\newblock Equilibrium identification and selection in finite games.
\newblock \emph{Operations Research}, 72\penalty0 (2):\penalty0 816--831, 2024.
\newblock \doi{10.1287/opre.2022.2413}.

\bibitem[Del~Pia et~al.(2017)Del~Pia, Ferris, and Michini]{delpia2017tu}
Alberto Del~Pia, Michael~C. Ferris, and Carla Michini.
\newblock Totally unimodular congestion games.
\newblock \emph{SIAM Journal on Discrete Mathematics}, 31\penalty0 (2):\penalty0 875--881, 2017.

\bibitem[Dragotto and Scatamacchia(2023)]{dragotto2023zero}
Gabriele Dragotto and Rosario Scatamacchia.
\newblock The zero regrets algorithm: Optimizing over pure nash equilibria via integer programming.
\newblock \emph{INFORMS Journal on Computing}, 35\penalty0 (5):\penalty0 1143--1160, 2023.

\bibitem[Duguet et~al.(2025{\natexlab{a}})Duguet, Harks, Schmidt, and Schwarz]{duguet2025branch2}
AloANGs Duguet, Tobias Harks, Martin Schmidt, and Julian Schwarz.
\newblock Branch-and-cut for computing approximate equilibria of mixed-integer generalized nash games.
\newblock \emph{arXiv preprint arXiv:2511.03340}, 2025{\natexlab{a}}.

\bibitem[Duguet et~al.(2025{\natexlab{b}})Duguet, Harks, Schmidt, and Schwarz]{duguet2025branch1}
AloANGSs Duguet, Tobias Harks, Martin Schmidt, and Julian Schwarz.
\newblock Branch-and-cut for mixed-integer generalized nash equilibrium problems.
\newblock \emph{arXiv preprint arXiv:2506.02520}, 2025{\natexlab{b}}.

\bibitem[Fabiani and Grammatico(2020)]{fabiani2020vehicle}
Filippo Fabiani and Sergio Grammatico.
\newblock Multi-vehicle automated driving as a generalized mixed-integer potential game.
\newblock \emph{IEEE Transactions on Intelligent Transportation Systems}, 21\penalty0 (3):\penalty0 1064--1073, 2020.
\newblock \doi{10.1109/TITS.2019.2901505}.

\bibitem[Feldman and Ron(2012)]{feldman2012capacitated}
Michal Feldman and Tom Ron.
\newblock Capacitated network design games.
\newblock In Maria Serna, editor, \emph{Algorithmic Game Theory: 5th International Symposium, SAGT 2012, Barcelona, Spain, October 22--23, 2012, Proceedings}, volume 7615 of \emph{Lecture Notes in Computer Science}, pages 132--143, Berlin, Heidelberg, 2012. Springer.
\newblock ISBN 978-3-642-33996-7.

\bibitem[Fotakis et~al.(2005)Fotakis, Kontogiannis, and Spirakis]{fotakis2005selfish}
Dimitris Fotakis, Spyros Kontogiannis, and Paul Spirakis.
\newblock Selfish unsplittable flows.
\newblock \emph{Theoretical Computer Science}, 348\penalty0 (2-3):\penalty0 226--239, 2005.

\bibitem[Hao and Michini(2025)]{hao2025integer}
Bainian Hao and Carla Michini.
\newblock Integer-splittable bin packing games.
\newblock \emph{Optimization Online}, 2025.
\newblock URL \url{https://optimization-online.org/2025/05/integer-splittable-bin-packing-games/}.
\newblock Full version; extended abstract in Proceedings of the 21st Conference on Web and Internet Economics (WINE), 2025. The results cited in this paper (Lemmas~1, 2, and~4) are proved in the full version.

\bibitem[Harks and Schwarz(2025)]{harks2025generalized}
Tobias Harks and Julian Schwarz.
\newblock Generalized {N}ash equilibrium problems with mixed-integer variables.
\newblock \emph{Mathematical Programming}, 209:\penalty0 231--277, 2025.

\bibitem[K{\"o}ppe et~al.(2011)K{\"o}ppe, Ryan, and Queyranne]{koppe2011rational}
Matthias K{\"o}ppe, Christopher~Thomas Ryan, and Maurice Queyranne.
\newblock Rational generating functions and integer programming games.
\newblock \emph{Operations research}, 59\penalty0 (6):\penalty0 1445--1460, 2011.

\bibitem[Lee et~al.(2024)Lee, Hildebrand, Cai, and B{\"u}y{\"u}ktahtak{\i}n]{lee2024random}
Hyunwoo Lee, Robert Hildebrand, Wenbo Cai, and {\.I}~Esra B{\"u}y{\"u}ktahtak{\i}n.
\newblock Random-restart best-response dynamics for large-scale integer programming games and their applications.
\newblock \emph{arXiv preprint arXiv:2409.04078}, 2024.

\bibitem[Rosen(1965)]{rosen1965existence}
J~Ben Rosen.
\newblock Existence and uniqueness of equilibrium points for concave n-person games.
\newblock \emph{Econometrica: Journal of the Econometric Society}, pages 520--534, 1965.

\bibitem[Rosenthal(1973)]{rosenthal1973class}
Robert~W. Rosenthal.
\newblock A class of games possessing pure-strategy nash equilibria.
\newblock \emph{International Journal of Game Theory}, 2\penalty0 (1):\penalty0 65--67, 1973.

\bibitem[Sagratella(2016)]{sagratella2016computing}
Simone Sagratella.
\newblock Computing all solutions of nash equilibrium problems with discrete strategy sets.
\newblock \emph{SIAM Journal on Optimization}, 26\penalty0 (4):\penalty0 2190--2218, 2016.

\bibitem[Sagratella(2017)]{sagratella2017potential}
Simone Sagratella.
\newblock Algorithms for generalized potential games with mixed-integer variables.
\newblock \emph{Computational Optimization and Applications}, 68\penalty0 (3):\penalty0 689--717, 2017.
\newblock \doi{10.1007/s10589-017-9927-4}.

\bibitem[Sagratella(2019)]{sagratella2019coupling}
Simone Sagratella.
\newblock On generalized {Nash} equilibrium problems with linear coupling constraints and mixed-integer variables.
\newblock \emph{Optimization}, 68\penalty0 (1):\penalty0 197--226, 2019.
\newblock \doi{10.1080/02331934.2018.1545125}.

\bibitem[Schwarze and Stein(2023)]{schwarze2023branch}
Silvia Schwarze and Oliver Stein.
\newblock A branch-and-prune algorithm for discrete {N}ash equilibrium problems.
\newblock \emph{Computational Optimization and Applications}, 86\penalty0 (2):\penalty0 491--519, 2023.

\end{thebibliography}

\appendix
\numberwithin{equation}{section}
\numberwithin{table}{section}
\numberwithin{figure}{section}

\section{Omitted Proofs}
\label{app:moved-proofs}

This section of the e-companion contains the proofs omitted from the main text, together with a remark on equilibrium-preserving restrictions used in Section~\ref{sec:experiments}.

\subsection{Proof of Proposition~\ref{prop:cei}}\label{app:proof-cei}
\begin{proof}
At a lifted $\alpha$-PNE, a feasible deviation has zero activation and its CEI follows from the equilibrium condition and the affine cost identities on $\mathcal H_{\mathrm{MI}}$. For an infeasible deviation, the right-hand side is at least $M_i$, which bounds the incumbent cost. Thus every CEI holds. Conversely, a feasible profitable deviation from a nonequilibrium profile has zero activation, and its exact affine cost representation makes its CEI violated. This proves exactness.

Under the additional representation hypothesis, the lifted equilibrium set is finite and rational, so its convex hull is a rational polytope whose vertices belong to the set. Convexity of $\mathcal H$ and linearity of the CEIs imply that all hull points satisfy both; a hull point also in $\Z^K\times W$ is therefore a lifted equilibrium by exactness. A linear objective on a nonempty polytope attains an optimum at a vertex.
\end{proof}

\subsection{Proof of Proposition~\ref{prop:gzr-correct}}\label{app:proof-gzr}
\begin{proof}
For an accepted profile $x$, the best-response checks give
$\pi^i(x)\leq\alpha\min_{x^i\in\calX_i(x^{-i})}\pi^i(x^i,x^{-i})$ for every player, which is precisely the equilibrium condition. Every initial or separated cut preserves all equilibria. A complete master optimization therefore establishes objective optimality of its best accepted profile; a complete infeasibility search proves that no equilibrium exists. Neither conclusion follows from an unresolved search alone.
\end{proof}

\begin{remark}[Equilibrium-preserving restrictions]\label{rmk:existence-preserving}
If initial restrictions jointly preserve at least one equilibrium whenever one exists, a completed infeasibility search still proves nonexistence. To preserve an optimal objective value, they must jointly preserve an optimal equilibrium. Theorem~\ref{ISBP: t1} supplies cost-preserving representatives in uniform ISBP; Proposition~\ref{prop:simultaneous} justifies the simultaneous use of vertex, bin-ordering, and player-anchoring restrictions in regular instances with $w\leq u$. Player anchoring permutes individual costs, so this combination preserves social cost but need not preserve a player-specific objective. Numerical certification is a separate issue from these mathematical preservation statements.
\end{remark}

\subsection{Vertices of a bounded simplex}
\label{app:proof-simplex-vertex}
The following lemma is used in the proofs of Corollary~\ref{ISBP: c1} and Theorem~\ref{ISBP: t1}.
\begin{lemma}[Vertices of a bounded simplex]
\label{lem:simplex-vertex}
Suppose $A^i = \mathbf{1}^\top$, so that $P^i(x^{-i})$ is a bounded simplex
(as in ISBP). Then $x^i \in P^i(x^{-i})$ is a vertex of $P^i(x^{-i})$ if and
only if $|\free(x^i)| \leq 1$.
\end{lemma}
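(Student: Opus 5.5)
The plan is to use the characterization stated just before Theorem~\ref{thm:weak-vest}: a point $x^i$ of $P^i(x^{-i})=\{x^i:\mathbf 1^\top x^i=w_i,\ 0\le x^i_j\le u_{ij}(x^{-i})\}$ is a vertex if and only if there is no nonzero direction $d$ with $\mathbf 1^\top d=0$ and $d_j=0$ for all $j\notin\free(x^i)$. With this, the lemma reduces to deciding when such a $d$ exists, and I will handle the two directions separately.

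For the ``if'' direction, suppose $|\free(x^i)|\le 1$. Any admissible direction $d$ is supported on at most one coordinate $j$. The equation $\mathbf 1^\top d=0$ then gives $d_j=0$, so $d=0$. Hence no direction within the minimal face exists, and $x^i$ is a vertex.

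For the ``only if'' direction, suppose $j\ne k$ are both in $\free(x^i)$. I will take $d=e_j-e_k$, which is nonzero, supported on $\free(x^i)$, and satisfies $\mathbf 1^\top d=0$. It is then an admissible direction, so $x^i$ is not a vertex.

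The only point needing care is the characterization itself, in case one prefers not to rely on the paper's assertion. In that case I would check directly that $x^i\pm\varepsilon(e_j-e_k)$ stays feasible for $\varepsilon=\min\{x^i_j,\,u_{ij}-x^i_j,\,x^i_k,\,u_{ik}-x^i_k\}>0$, which exhibits $x^i$ as a midpoint of two distinct feasible points. Conversely, I would note that when at most one coordinate is strictly between its bounds, the $k_i-1$ tight bound constraints together with $\mathbf 1^\top x^i=w_i$ give $k_i$ linearly independent active constraints. That independence count is the main obstacle, but it is routine.
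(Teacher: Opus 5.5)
Your proof is correct and essentially the paper's: the non-vertex direction uses the same perturbation $x^i\pm\varepsilon(e_j-e_k)$ with midpoint $x^i$. For the converse, the paper argues directly from the extreme-point definition: if $x^i=\lambda y+(1-\lambda)z$, the tight bounds force $y$ and $z$ to agree off the single free coordinate, and $\mathbf 1^\top y=\mathbf 1^\top z$ forces agreement there. This is the convex-combination form of your direction or active-constraint-rank argument, and it avoids relying on the face characterization that the paper asserts without proof.
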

\begin{proof}
If distinct coordinates $j$ and $\ell$ are free at $x^i$, then, for all
sufficiently small $\varepsilon>0$,
\[
  x^i\pm\varepsilon(e_j-e_\ell)\in P^i(x^{-i}).
\]
These distinct points have midpoint $x^i$, so $x^i$ is not a vertex.
Conversely, suppose $|\free(x^i)|\leq1$ and
$x^i=\lambda y+(1-\lambda)z$ with $y,z\in P^i(x^{-i})$ and
$\lambda\in(0,1)$. Each coordinate of $x^i$ at a bound forces the same
coordinate of $y$ and $z$ to equal that bound. Thus they can differ only
on the one possible free coordinate, where
$\mathbf1^\top y=\mathbf1^\top z$ forces equality. Hence
$y=z=x^i$, proving that $x^i$ is a vertex.
\end{proof}

\subsection{Proof of Corollary~\ref{thm:weak-vest-separable}}
\label{app:proof-separable-vest}

\begin{proof}
Fix $i$ and a feasible direction $d$ within the minimal face $F$
containing $x^i$, and set $S=\free(x^i)$. Since
$x^i\in\operatorname{ri}(F)$, for some $\varepsilon>0$,
$x^i+td\in F$ whenever $|t|<\varepsilon$. Coordinates outside $S$ are
fixed on $F$, so Theorem~\ref{thm:weak-vest} implies that
\[
  \sum_{j\in S}h_j(t),\qquad
  h_j(t):=f^i_j(x^i_j+td_j;x^{-i}),
\]
is constant on this interval. For any $a,b$ in the interval and
$\lambda\in(0,1)$, the concavity gaps
\[
  h_j(\lambda a+(1-\lambda)b)
  -\lambda h_j(a)-(1-\lambda)h_j(b)
\]
are nonnegative and sum to zero; hence each vanishes. Thus every $h_j$
is affine. If $j\in S^i_+(x^i)$ and $d_j\neq0$, reparametrizing by
$s=x^i_j+td_j$ would make $f^i_j(\cdot;x^{-i})$ affine on a neighborhood
of $x^i_j$, contrary to the definition of $S^i_+(x^i)$. Therefore
$d_j=0$ for every $j\in S^i_+(x^i)$.
\end{proof}

\subsection{Proof of Corollary~\ref{ISBP: c1}}
\label{app:proof-isbp-c1}

\begin{proof}
For $r\in\{j,k\}$,
\[
  0<x^i_r<u_r-x^{-i}_r,
\]
so $j$ and $k$ are free and Lemma~\ref{lem:simplex-vertex} makes $x^i$
a nonvertex. If one of them, say $\ell$, were shared, then
$\ell\in S^i_+(x^i)$. With $q$ denoting the other bin,
$d=e_\ell-e_q$ is a feasible direction within the minimal face
containing $x^i$, since both coordinates are free. But
Corollary~\ref{thm:weak-vest-separable} requires $d_\ell=0$, contradicting
$d_\ell=1$. Hence neither bin is shared.
\end{proof}

\section{Instance Generation}
\label{app:instance-generation}

This section records how the instances used in Section~\ref{sec:experiments} are generated. All generators are seeded, and the scripts that produce them accompany the code in the repository.

\subsection{Integer splittable bin packing}
We generate two families of ISBP instances in the model of \citet{hao2025integer}.
\emph{Regular instances} have identical item weights $w$, bin capacities $u$, and
bin costs $c_j=w$; since the game is invariant under a common scaling of the bin
costs, this normalization affects neither the equilibria nor the reported ratios.
We enumerate all tuples $(n,m,w,u)$ with $n\in\{4,\ldots,10\}$ players,
$m\in\{n-2,\ldots,n+3\}$ bins, $w\in\{4,\ldots,15\}$, and $u$ taken from the four
smallest values consistent with $mu-nw\geq0$ and $m\leq\lceil nw/u\rceil$, so that
$m=\lceil nw/u\rceil$ in every instance. This yields 842 tuples.
These conditions do not exclude $w>u$: of the 842 regular instances, 215 have $w>u$, so the
player-anchoring constraint of \S\ref{app:symmetry}, which requires $w\leq u$, is
imposed only on the remaining 627.
\emph{Non-regular instances} introduce 20\% heterogeneity ($\delta=0.2$) around the
mean: item weights $w_i$, bin capacities $u_j$, and bin costs $c_j$ are drawn
uniformly and independently from
$[\lceil\bar w(1-\delta)\rceil,\lfloor\bar w(1+\delta)\rfloor]$ for the weights and
from $[\lceil\bar u(1-\delta)\rceil,\lfloor\bar u(1+\delta)\rfloor]$ for both the
capacities and the costs. When the drawn capacities do not accommodate the total
demand, the smallest ones are incremented until $\sum_ju_j=\sum_iw_i$; this occurs
on 220 instances, which therefore have no spare capacity.
RRR-BRD initializes each restart from a random feasible profile; on tight
instances with $\sum_ju_j=\sum_iw_i$ the random sampler may fail, in which case
the initial profile is built by a greedy sequential fill, which is always feasible
because items are splittable. All 842 tuples of each family are reported.

\subsection{Network formation games}
The instances of \citet{dragotto2023zero} are uncapacitated and hence outside our class, so we generate our own (\S\ref{subsec:nfg}).
We generate 128 NFG instances on layered directed networks.
Each network has $L \in \{3, 5, 8, 10\}$ layers with $W \in \{3, 5, 8, 10\}$ nodes per layer.
Inter-layer edges are drawn independently with probability $p = 0.8$, with at least one edge per node to ensure source-to-sink connectivity, and costs $c_e \sim U\{1, 20\}$.
Each of $n \in \{2, 4, 6, 8\}$ players routes integer-splittable flow from a common source in the first layer to a common sink in the last layer, with individual demands $d_i \sim U\{2, 5\}$.
Edge capacities are set relative to the total demand $D = \sum_i d_i$ in two regimes: \emph{tight}, with $\kappa_e \sim U[\max(d_{\max}, \lceil 0.7 D \rceil), \lceil 0.9 D \rceil]$, and \emph{loose}, with $\kappa_e \sim U[\max(d_{\max}, \lceil 0.9 D \rceil), \lceil 1.1 D \rceil]$; capacities are raised until all players can route simultaneously. The design is 2 capacity levels $\times$ 4 player counts $\times$ 4 layer counts $\times$ 4 widths $= 128$ instances.

\subsection{Generalized knapsack games}
We follow the Pisinger-like generation scheme of \citet{duguet2025branch1}.
For each player $i$ and item $j$, weights are drawn independently as $w_{ij} \sim U\{1, R\}$ with $R = 1{,}000$.
Profits depend on the \emph{correlation type}:
(i)~\emph{uncorrelated}: $p_{ij} \sim U\{1, R\}$, independent of weights;
(ii)~\emph{weakly correlated}: $p_{ij} \sim U\{\max(1,\, w_{ij} - R/10),\; w_{ij} + R/10\}$;
(iii)~\emph{strongly correlated}: $p_{ij} = w_{ij} + R/10$ (deterministic).
Knapsack capacities are $b_i = \max(1, \lfloor \mathit{cf} \sum_{j} w_{ij} \rfloor)$ with \emph{capacity factor} $\mathit{cf} \in \{0.2, 0.5\}$, and shared item capacities are $c_j \sim U\{1, n\}$.

We evaluate GKG in two settings. The first is the benchmark of \citet{duguet2025branch1}---$n \in \{2, 3, 4\}$ players, $m \in \{5, 10, 15, 20, 30, 40, 50\}$ items, two capacity factors, three correlation types and 10 replications, for $3 \times 7 \times 2 \times 3 \times 10 = 1{,}260$ instances---for which we use their published instance files rather than regenerating them, so that Section~\ref{subsec:comp:duguet} compares the two methods on identical data. Their generator differs from the scheme above in two respects: it draws from $R = 100$, and it raises each knapsack capacity to $\max(\lfloor \mathit{cf} \sum_j w_{ij} \rfloor, \max_j w_{ij})$, so that every item fits on its own.

The second is a scalability study with $n \in \{2, 3, 5, 8, 10, 15, 20, 25, 30, 35, 40, 45, 50\}$ players, $m \in \{30, 50\}$ items, the same three correlation types, and 2 replications, yielding $13 \times 2 \times 3 \times 2 = 156$ instances; the capacity factor is not crossed here but assigned one value per replication, so that each $(n, m, \mathrm{corr})$ cell contains one instance at $\mathit{cf} = 0.2$ and one at $\mathit{cf} = 0.5$.

\subsection{Generalized knapsack games with bilinear coefficients}
GKGB adds to the knapsack profits the \emph{reciprocally bilinear} interaction terms $\sum_{k \neq i} \sum_{j} C^i_{kj} x^i_j x^k_j$ of \citet{dragotto2023zero}.
We adopt their Type~B and Type~C datasets, which define interactions between $n \in \{2, 3\}$ players over $m \in \{25, 50, 75, 100\}$ items with budget ratios in $\{2, 5, 8\}$: Type~B has positive coefficients $C^i_{kj} \sim U[1, 100]$ and Type~C mixed ones $C^i_{kj} \sim U[-100, 100]$.
We add \emph{Type~BC}, with $C^i_{kj} \sim U[-50, 100]$, interpolating between the two.
To convert each IPG into a GNEP we impose shared per-item capacities $\sum_i x^i_j \leq c_j$ with $c_j \sim U\{1, n\}$ (seed~42), yielding 72 instances, 24 per type. The master formulation is given in \S\ref{app:explicit-formulations}.

\section{Explicit Formulations}
\label{app:explicit-formulations}

This section of the e-companion records the explicit application-specific formulas referenced in
Section~\ref{sec:games}.

\subsection{ISBP formulation}\label{app: ISBP formulation}
For each player $i$, bin $j$, and integers $0\leq t\leq u_j$, $0\leq s\leq\min\{t,w_i\}$, let $y^{j,t}_{i,s}$ indicate the joint state $(x^i_j,x_j)=(s,t)$. Let $z^j_t$ indicate total load $t$. The continuous relaxation of the following constraints, together with the recovery equations below, defines $\mathcal H$; the binary requirements specify $W$.

\begin{subequations}
\label{eq:isbp-H}
\begin{align}
& \sum_{t=0}^{u_j}\sum_{s=0}^{\min\{t,w_i\}} y^{j,t}_{i,s}=1
&& \forall i\in [n],\; j\in [m],
\label{eq:isbp-c1} \\
& \sum_{j\in [m]}\sum_{t=0}^{u_j}\sum_{s=0}^{\min\{t,w_i\}} s\,y^{j,t}_{i,s}=w_i
&& \forall i\in [n],
\label{eq:isbp-c2} \\
& \sum_{s=0}^{\min\{t,w_i\}} y^{j,t}_{i,s}\le z^j_t
&& \forall i\in [n],\; j\in [m],\; t=0,\ldots,u_j,
\label{eq:isbp-c3} \\
& \sum_{t=0}^{u_j} z^j_t=1
&& \forall j\in [m],
\label{eq:isbp-c4} \\
& \sum_{i\in [n]}\sum_{s=0}^{\min\{t,w_i\}} s\,y^{j,t}_{i,s}
= t\, z^j_t
&& \forall j\in [m],\; t=0,\ldots,u_j,
\label{eq:isbp-c5} \\
& y^{j,t}_{i,s}\in\{0,1\}
&& \forall i\in [n],\; j\in [m],\; t=0,\ldots,u_j,\; s=0,\ldots,\min\{t,w_i\},
\label{eq:isbp-c6} \\
& z^j_t\in\{0,1\}
&& \forall j\in [m],\; t=0,\ldots,u_j.
\label{eq:isbp-c7}
\end{align}
\end{subequations}
Constraints~\eqref{eq:isbp-c1}--\eqref{eq:isbp-c5} select one state per player--bin pair, enforce demand, and make the selected total load equal the sum of individual contributions.

To retain the original strategy variables in $\mathcal H$, include the recovery equations
\[
x^i_j=\sum_{t=0}^{u_j}\sum_{s=0}^{\min\{t,w_i\}}s\,y^{j,t}_{i,s},\qquad x_j=\sum_i x^i_j.
\]
For each player $i$, the lifted cost $\tilde{\pi}^i$ is:
\begin{equation}
\label{eq:isbp-cost}
\tilde{\pi}^i \;=\; \sum_{j\in [m]}
\sum_{t=1}^{u_j}\sum_{s=1}^{\min\{t,w_i\}}
c_j \frac{s}{t}\, y^{j,t}_{i,s},
\end{equation}
which is affine in the $y$ variables and equals the original nonlinear
cost $\pi^i$ at every point of $\mathcal H_{\mathrm{MI}}$.

The encoding uses $O(nmuw)$ binary variables for $u=\max_j u_j$ and $w=\max_iw_i$. It replaces nonlinear costs by an exact affine representation, at a size that grows with capacities and demands.

\subsection{ISBP CEI formulation}
\label{app:ISBP CEI}
Fix player $i$ and a deviation $\hat{x}^i$ with
$\sum_j\hat{x}^i_j=w_i$, $0\leq\hat{x}^i_j\leq u_j$, and support
$\hat B_i=\{j:\hat{x}^i_j>0\}$.  For $j\in\hat B_i$, define
\begin{align*}
\mathcal F_j(\hat{x}^i_j)
 &=\{(s,t):0\leq t\leq u_j,\ 
       \max\{0,t-u_j+\hat{x}^i_j\}\leq s\leq\min\{t,w_i\}\},\\
\mathcal I_j(\hat{x}^i_j)
 &=\{(s,t):u_j-\hat{x}^i_j+1\leq t\leq u_j,\ 
       0\leq s\leq\min\{t-u_j+\hat{x}^i_j-1,w_i\}\}.
\end{align*}
The state-indexed CEI is
\begin{equation}
\sum_{j\in[m]}\sum_{t=1}^{u_j}\sum_{s=1}^{\min\{t,w_i\}}
 c_j\frac{s}{t}y^{j,t}_{i,s}
\leq \alpha\sum_{j\in\hat B_i}
 \sum_{(s,t)\in\mathcal F_j(\hat{x}^i_j)}
 c_j\frac{\hat{x}^i_j}{(t-s)+\hat{x}^i_j}y^{j,t}_{i,s}
 +M\sum_{j\in\hat B_i}
 \sum_{(s,t)\in\mathcal I_j(\hat{x}^i_j)}y^{j,t}_{i,s},
\label{eq:supp-isbp-cei}
\end{equation}
with $M=\sum_jc_j$.  The last term deactivates the inequality precisely when
the deviation exceeds residual capacity on a used bin.

For the weak VEST inequalities, let
\[
g_{i,j}=\sum_{t=1}^{u_j-1}\sum_{s=1}^{\min\{t-1,w_i\}}y^{j,t}_{i,s},\qquad
h_{i,j}=\sum_{t=1}^{\min\{u_j-1,w_i\}}y^{j,t}_{i,t},\qquad
f_{i,j}=g_{i,j}+h_{i,j},\qquad G_i=\sum_jg_{i,j}.
\]
The inequalities are
\begin{align}
G_i&\leq1 &&\forall i\in[n],\label{eq:supp-isbp-wvest1}\\
g_{i,j_1}+f_{i,j_2}&\leq1
 &&\forall i\in[n],\ j_1\neq j_2.\label{eq:supp-isbp-wvest2}
\end{align}

\subsection{NFG formulation}\label{app: NFG formulation}
Use the ISBP encoding with edges replacing bins, $w_i$ replaced by $D_i$, and $u_j$ by $u_e$. The individual-flow cap $D_i$ is valid on the directed acyclic networks specified in Section~\ref{subsec:nfg}. Retain~\eqref{eq:isbp-c1} and~\eqref{eq:isbp-c3}--\eqref{eq:isbp-c7}, and recover $x^i_e=\sum_{t=0}^{u_e}\sum_{s=0}^{\min\{t,D_i\}}s\,y^{e,t}_{i,s}$. Replace the ISBP demand equation~\eqref{eq:isbp-c2} by node balance:

\begin{equation}
\label{eq:nfg-flow}
\sum_{e \in \delta^+(v)} \sum_{t,s} s\, y^{e,t}_{i,s}
\;-\;
\sum_{e \in \delta^-(v)} \sum_{t,s} s\, y^{e,t}_{i,s}
\;=\;
\begin{cases}
  D_i & v = o_i, \\
  -D_i & v = d_i, \\
  0 & \text{otherwise},
\end{cases}
\qquad \forall i \in [n],\ v \in V,
\end{equation}
where each sum over $(t,s)$ uses $0\leq t\leq u_e$ and $0\leq s\leq\min\{t,D_i\}$, and $\delta^+(v)$ and $\delta^-(v)$ denote the sets of edges leaving
and entering node $v$. Together with the constraints inherited from
ISBP, these define the NFG mixed-integer formulation; relaxing the binary domains gives $\mathcal H$. The lifted
cost $\tilde{\pi}^i$ is:
\[
  \tilde{\pi}^i \;=\; \sum_{e \in E}
  \sum_{t=1}^{u_e}\sum_{s=1}^{\min\{t,D_i\}}
  c_e \frac{s}{t}\, y^{e,t}_{i,s},
\]
which is affine in the $y$ variables and equals $\pi^i$ at every point of $\mathcal H_{\mathrm{MI}}$.

\subsection{NFG conditional equilibrium and cycle inequalities}
\label{app: NFG CEI}
For a feasible $o_i$--$d_i$ deviation $\hat{x}^i$ of value $D_i$, let
$\hat E_i=\{e:\hat{x}^i_e>0\}$ and define
\begin{align*}
\mathcal F_e(\hat{x}^i_e)
 &=\{(s,t):0\leq t\leq u_e,\ 
       \max\{0,t-u_e+\hat{x}^i_e\}\leq s\leq\min\{t,D_i\}\},\\
\mathcal I_e(\hat{x}^i_e)
 &=\{(s,t):u_e-\hat{x}^i_e+1\leq t\leq u_e,\ 
       0\leq s\leq\min\{t-u_e+\hat{x}^i_e-1,D_i\}\}.
\end{align*}
The NFG CEI is the edge-indexed inequality
\begin{equation}
\sum_{e\in E}\sum_{t=1}^{u_e}\sum_{s=1}^{\min\{t,D_i\}}
 c_e\frac{s}{t}y^{e,t}_{i,s}
\leq \alpha\sum_{e\in\hat E_i}
 \sum_{(s,t)\in\mathcal F_e(\hat{x}^i_e)}
 c_e\frac{\hat{x}^i_e}{(t-s)+\hat{x}^i_e}y^{e,t}_{i,s}
 +M\sum_{e\in\hat E_i}
 \sum_{(s,t)\in\mathcal I_e(\hat{x}^i_e)}y^{e,t}_{i,s}.
\label{eq:supp-nfg-cei}
\end{equation}
Here $M=\sum_{e\in E}c_e$ bounds any player cost. For an undirected cycle $C$, define
\[
g_{i,e}=\sum_{t=1}^{u_e-1}\sum_{s=1}^{\min\{t-1,D_i\}}y^{e,t}_{i,s},
\quad G_i^C=\sum_{e\in C}g_{i,e},
\quad H_i^C=\sum_{e\in C}\sum_{t=1}^{\min\{u_e-1,D_i\}}y^{e,t}_{i,t}.
\]
For any such cycle, the valid inequality is
\begin{equation}
g_{i,e}+\bigl(G_i^C+H_i^C-|C|\bigr)\leq0
\qquad \forall i\in[n],\ C,\ e\in C.
\label{eq:supp-nfg-vest}
\end{equation}
When all edges of $C$ are free for player $i$, the inequality forces $g_{i,e}=0$. If some edge is not free, the inequality is automatic. The implementation adds inequalities for a fixed fundamental cycle basis a priori; the CEI count refers to separated equilibrium inequalities. This basis supplies only a subfamily: a violating cycle in the incumbent's free-edge subgraph need not be a basis cycle, and basis cycles may contain non-free edges. Enforcing the full condition requires separation on that free-edge subgraph.

\subsection{GKG and GKGB CEI formulations}
\label{app:gkg-cei}
Binary variables $x_{ij} \in \{0,1\}$ represent player--item decisions.
Feasibility indicators $z^{\mathrm{full}}_{ij} \in \{0,1\}$ track whether item $j$ is at capacity from player $i$'s perspective:
\begin{align}
\sum_{k \neq i} x_{kj} &\leq (c_j - 1) + (n - 1) z^{\mathrm{full}}_{ij} \label{eq:gkg-zfull-a} \\
\sum_{k \neq i} x_{kj} &\geq c_j \, z^{\mathrm{full}}_{ij}. \label{eq:gkg-zfull-b}
\end{align}

\textbf{GKG.} Take $M_i=P_i$ in the following inequality (with $M=M_i$).
For player $i$ and deviation $\xhat^i$, the CEI is:
\begin{equation}
\label{eq:gkg-cei}
P_i - \sum_j p_{ij} x_{ij} \leq \alpha \left( P_i - \sum_j p_{ij} \xhat^i_j \right) + M \sum_{j : \xhat^i_j = 1} z^{\mathrm{full}}_{ij}.
\end{equation}
When the deviation is feasible ($z^{\mathrm{full}}_{ij} = 0$ for all $j$ with $\xhat^i_j = 1$), this enforces the $\alpha$-equilibrium condition.
When infeasible, the big-$M$ term deactivates the constraint.

\textbf{GKGB.}
For a fixed deviation $\xhat^i$, its interaction terms are affine in the opponents' master variables. The incumbent interaction terms use the McCormick variables specified below.
The feasibility indicators $z^{\mathrm{full}}_{ij}$ carry over unchanged.
For player~$i$ and deviation $\xhat^i$, the CEI is:
\begin{equation}
\label{eq:gkgb-cei}
P_i - \sum_j p_{ij} x_{ij} - \sum_{k \neq i} \sum_j C^i_{kj}\, x_{ij}\, x_{kj}
\leq \alpha \left( P_i - \sum_j p_{ij} \xhat^i_j - \sum_{k \neq i} \sum_j C^i_{kj}\, \xhat^i_j\, x_{kj} \right) + M \sum_{j : \xhat^i_j = 1} z^{\mathrm{full}}_{ij}.
\end{equation}
In the master MILP, each bilinear term $x^i_j \cdot x^k_j$ is linearized exactly on binary strategies by the McCormick inequalities with auxiliary variable $y^{ik}_j$ and constraints $y^{ik}_j \leq x^i_j$, $y^{ik}_j \leq x^k_j$, $y^{ik}_j \geq x^i_j + x^k_j - 1$, $y^{ik}_j \geq 0$.
For GKGB we take $M_i = P_i + \sum_{k \neq i} \sum_j \max\{-C^i_{kj},\, 0\}$, an upper bound on any realized cost $\pi^i$.

\section{Symmetry and Compatibility in ISBP}
\label{app:symmetry}
\paragraph{Bin-load ordering.}
Partition bins into classes of identical capacity and cost. For a class $D=\{j_1,\ldots,j_{|D|}\}$, impose
\begin{equation}\label{eq:bsb}
\sum_{t=0}^{u_{j_\ell}}t z^{j_\ell}_t\geq
\sum_{t=0}^{u_{j_{\ell+1}}}t z^{j_{\ell+1}}_t
\qquad(\ell=1,\ldots,|D|-1).
\end{equation}
\begin{theorem}[Bin-load ordering]\label{thm:bsb}
For every $\alpha$-PNE of an ISBP instance, relabeling bins within identical classes gives an $\alpha$-PNE satisfying~\eqref{eq:bsb} and preserving every player's cost.
\end{theorem}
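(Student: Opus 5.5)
The plan is to view bin relabeling within a class as a symmetry of the game and then apply it to a sorting permutation. Fix an $\alpha$-PNE $x$ and a partition of the bins into classes of identical capacity and cost. Let $\sigma$ be a permutation of $[m]$ that maps each class $D$ to itself and orders the bins of $D$ by nonincreasing total load. Such a $\sigma$ exists because we may simply sort the loads $x_j$, $j\in D$, class by class. Define $\tilde x$ by $\tilde x^i_j = x^i_{\sigma^{-1}(j)}$ for every player $i$ and bin $j$; that is, every player's allocation moves together with the bin. The total loads then satisfy $\tilde x_j = x_{\sigma^{-1}(j)}$. In the lifted variables, $\tilde z^j_t=z^{\sigma^{-1}(j)}_t$, so $\sum_t t\tilde z^{j}_t=\tilde x_j$. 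By the choice of $\sigma$, the ordering inequalities~\eqref{eq:bsb} hold.

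The key lemma is that $\sigma$ is an automorphism of the game. Because $u_{\sigma(j)}=u_j$ and $c_{\sigma(j)}=c_j$, the permuted profile $\tilde x$ is jointly feasible: item sizes are preserved since $\sum_j\tilde x^i_j=\sum_j x^i_j=w_i$, and capacities are preserved since $\tilde x_j=x_{\sigma^{-1}(j)}\le u_{\sigma^{-1}(j)}=u_j$. The same reindexing argument for the cost gives $\pi^i(\tilde x)=\sum_j c_j\tilde x^i_j/\tilde x_j=\sum_{j'}c_{j'}x^i_{j'}/x_{j'}=\pi^i(x)$, with the empty-bin convention carried along termwise. This proves cost preservation.

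For the equilibrium property, the step needing the most care is showing that the conditional strategy sets correspond under $\sigma$. Let $P_\sigma$ denote the coordinate permutation $y\mapsto (y_{\sigma^{-1}(j)})_j$. I will show that $y^i\in\calX_i(x^{-i})$ if and only if $P_\sigma y^i\in\calX_i(\tilde x^{-i})$, and that $\pi^i(P_\sigma y^i,\tilde x^{-i})=\pi^i(y^i,x^{-i})$. Both facts follow from the previous paragraph applied to the full profile $(y^i,x^{-i})$, since permuting that profile yields exactly $(P_\sigma y^i,\tilde x^{-i})$. The correspondence is a bijection because $\sigma^{-1}$ also preserves classes.

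Now suppose $\tilde x$ were not an $\alpha$-PNE. Then some player $i$ has a deviation $\hat y\in\calX_i(\tilde x^{-i})$ with $\pi^i(\tilde x)>\alpha\,\pi^i(\hat y,\tilde x^{-i})$. Pulling this deviation back by $P_\sigma^{-1}$ gives a feasible deviation from $x$ with the same cost, while the incumbent cost $\pi^i(x)$ equals $\pi^i(\tilde x)$. This violates Definition~\ref{def:alpha-pne} at $x$, so $\tilde x$ is an $\alpha$-PNE.
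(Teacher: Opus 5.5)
Your proposal is correct and follows essentially the same route as the paper's proof: sort the bins within each identical class by load, apply that one permutation to every player's allocation, and check that capacities, demands and costs are preserved. Any deviation from the relabeled profile then pulls back under the inverse permutation to a feasible deviation from the original profile with the same cost. The difference is only that you spell out the bijection between the conditional strategy sets and the cost identities, which the paper states in a line each.
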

\begin{proof}
Sort the bins in each class by load. Relabel every player's allocation using the same permutation. Capacities, demands, and costs are preserved. Applying the inverse permutation to any feasible deviation from the relabeled profile gives a feasible deviation with the same cost from the original profile. Thus all $\alpha$-equilibrium comparisons are preserved.
\end{proof}
Permuting a candidate deviation within an identical-bin class likewise produces another admissible candidate and hence another valid CEI. This does not require the permuted deviation to be feasible against the current opponents; its activation term accounts for that possibility.

\paragraph{Player anchoring.}
For regular instances with common item size $w\leq u$, set $k=\lceil nw/u\rceil$. Then $k\leq n$, and joint feasibility gives $k\leq m$. Impose
\begin{equation}\label{eq:player-anchor}
\sum_{t=1}^{u}\sum_{s=1}^{\min\{t,w\}}y^{i,t}_{i,s}\geq1
\qquad(i=1,\ldots,k).
\end{equation}
\begin{theorem}[Player anchoring]\label{thm:player-anchor}
For every PNE of a regular instance with $w\leq u$, relabeling identical bins and players gives a PNE satisfying~\eqref{eq:bsb} and~\eqref{eq:player-anchor}. Individual costs are permuted and social cost is preserved.
\end{theorem}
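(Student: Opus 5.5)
We prove Theorem~\ref{thm:player-anchor} with two symmetries applied in sequence: first a permutation of the players, then a bin relabeling as in Theorem~\ref{thm:bsb}. Both preserve the PNE property in a regular instance, because all items have the same size $w$ and all bins have the same capacity $u$ and cost. Permuting players by $\sigma$ maps a profile $x$ to $x'$ with $x'^{\sigma(i)}=x^i$. The conditional strategy sets and cost functions are symmetric under this relabeling, so every feasible deviation of player $\sigma(i)$ at $x'$ is a feasible deviation of player $i$ at $x$ with the same cost. Hence $x'$ is a PNE whose cost vector is a permutation of that of $x$, and its social cost is unchanged. A bin relabeling preserves the PNE property and every player's cost by the argument of Theorem~\ref{thm:bsb}.

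Start from a PNE $x$. By Lemma~1 of \citet{hao2025integer}, exactly $k=\lceil nw/u\rceil$ bins are nonempty. We select $k$ distinct players together with a bijection to the nonempty bins, such that each selected player uses her assigned bin. This is a matching problem in the bipartite graph that joins a player to each bin she uses.

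\textbf{Main step: existence of a matching saturating the nonempty bins.} We verify Hall's condition for any set $B$ of nonempty bins. The players using some bin of $B$ carry the total load of $B$. Each player contributes at most $w\le u$ units in total, so at least $\lceil \mathrm{load}(B)/w\rceil$ players are needed. It therefore suffices that $\mathrm{load}(B)\ge (|B|-1)w+1$, and this inequality is the expected difficulty. For a single bin it is trivial, since the bin is nonempty. For larger $B$ I would use the equilibrium structure. At most one bin can be unsaturated in a way that contributes to a shortfall, because the PNE occupies the minimum number $k$ of bins. Corollary~\ref{ISBP: c1} restricts how unsaturated bins can be shared. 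Combining the two, all but possibly one bin of $B$ are full, and that gives $\mathrm{load}(B)\ge (|B|-1)u+1\ge(|B|-1)w+1$.

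If the full-bin claim cannot be established directly, the fallback is a greedy construction. Process the bins in nonincreasing order of load, and assign to each bin a not-yet-chosen player who uses it. Such a player exists because a full bin contains $u\ge w$ units, and fewer than $u/w$ of those units can come from players already assigned only to other bins. This counting is the step I would check most carefully. The fallback still needs the structural input that nonempty bins other than possibly one are full. I would try to extract this from the occupancy of $k$ bins together with the fact that, in a PNE, two unsaturated bins cannot both be solo for different players and coexist with total residual capacity at least $u$ without a profitable merge.

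Given the matching, choose the player permutation so that the players matched to the nonempty bins become players $1,\dots,k$. Then sort the bins by load in nonincreasing order, which satisfies~\eqref{eq:bsb}; the nonempty bins become bins $1,\dots,k$. Finally, within the first $k$ players, permute the players so that player $i$ is matched to bin $i$. Any permutation of players preserves~\eqref{eq:bsb}, because the bin loads do not change. The resulting profile satisfies~\eqref{eq:player-anchor}, since player $i$ uses bin $i$ for $i\le k$. By the first paragraph it is a PNE with permuted individual costs and the same social cost.
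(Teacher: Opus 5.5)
Your overall plan is the same as the paper's: use Lemma~1 of \citet{hao2025integer} to get exactly $k$ occupied bins, apply Hall's theorem to the player--bin incidence graph, sort the bins by load, then permute players. Your reduction of Hall's condition to $\mathrm{load}(B)\ge(|B|-1)w+1$ is also correct. The gap is in how you justify that inequality. The claim that at a PNE all but possibly one occupied bin of $B$ is full is false. Take the regular instance with $n=4$, $m=2$, $w=3$, $u=7$ and common bin cost $c$. Let players $1,2$ put their items in bin~1 and players $3,4$ in bin~2, so both bins have load $6<7$. Bin~2 has only one free unit, so player~1 can choose only $x^1=(3,0)$ or $x^1=(2,1)$, with costs $c/2$ and $c(2/5+1/7)=19c/35>c/2$. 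By symmetry this is a PNE with two unsaturated occupied bins. Corollary~\ref{ISBP: c1} cannot exclude it, since each player uses only one unsaturated bin. Your fallback greedy construction explicitly depends on the same false ``all but one full'' claim, so it does not rescue the step either.

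The inequality you need follows from counting alone, with no equilibrium structure beyond Lemma~1. Since $k=\lceil nw/u\rceil$, the total residual capacity of the occupied bins is $ku-nw<u$. Every set $B$ of occupied bins therefore has integer residual capacity at most $u-1$, which gives $\mathrm{load}(B)\ge|B|u-(u-1)=(|B|-1)u+1\ge(|B|-1)w+1$. This is the paper's argument in contrapositive form: if $B$ had at most $|B|-1$ neighboring players, then $nw\le(|B|-1)w+(k-|B|)u\le(k-1)u<nw$. With this replacement, the rest of your proof goes through: sort the bins so the occupied ones are $1,\dots,k$, then permute players so that player $i$ is matched to bin $i$, noting that player permutations leave the bin-load ordering intact.
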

\begin{proof}
Sort bins by load. By Lemma~1 of \citet{hao2025integer}, exactly $k$ bins are occupied. Form the bipartite graph joining these bins to the players using them. If a nonempty set $B$ of bins has at most $|B|-1$ neighboring players, then
\[
nw\leq(|B|-1)w+(k-|B|)u\leq(k-1)u<nw,
\]
a contradiction. Hall's theorem gives distinct player representatives for all occupied bins. Relabel representative $i$ as player $i$ and extend this injection to a player permutation. Player demands are identical, so equilibrium is preserved, costs are permuted, and bin loads remain ordered.
\end{proof}

\begin{proposition}[Simultaneous compatibility]\label{prop:simultaneous}
In a regular instance with $w\leq u$, every PNE has an equal-social-cost representative satisfying~\eqref{eq:vest-cut},~\eqref{eq:bsb}, and~\eqref{eq:player-anchor} simultaneously.
\end{proposition}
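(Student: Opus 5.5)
We prove Proposition~\ref{prop:simultaneous} by applying the three restrictions in sequence, each time checking that the later transformation does not destroy what the earlier one achieved. Let $x$ be a PNE of a regular instance with $w\leq u$. First apply Theorem~\ref{ISBP: t1} (regular instances are uniform) to obtain a PNE $\tilde x$ with the same individual costs in which every player's strategy is a vertex of $P^i(\tilde x^{-i})$. By Lemma~\ref{lem:simplex-vertex}, each player then has at most one free coordinate, that is, at most one bin with $0<\tilde x^i_j$ and $\tilde x_j<u$. This is exactly the statement that each player uses at most one unsaturated bin, so $\tilde x$ satisfies the strong VEST cut~\eqref{eq:vest-cut}. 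Here we only need to note that the one-hot sum in~\eqref{eq:vest-cut} counts the bins $j$ with $1\leq s=\tilde x^i_j$ and total load $1\leq t\leq u-1$.

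Next, apply the relabeling of Theorem~\ref{thm:player-anchor} to $\tilde x$. That relabeling is a composition of a bin permutation $\sigma$ and a player permutation $\tau$. In a regular instance all bins form one identical class and all players have the same demand, so $\sigma$ and $\tau$ are automorphisms of the game. The resulting profile $x^*$, defined by $x^{*\tau(i)}_{\sigma(j)}=\tilde x^i_j$, is therefore again a PNE, and it satisfies~\eqref{eq:bsb} and~\eqref{eq:player-anchor} by that theorem. The social cost is unchanged because individual costs are only permuted.

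The remaining step, which I expect to be the only real point to check, is that~\eqref{eq:vest-cut} survives this relabeling. The property ``player $i$ uses at most one bin $j$ with $0<x^i_j$ and $0<x_j<u$'' is invariant under any simultaneous permutation of bins and players. Bin loads are permuted by $\sigma$, so saturation status travels with the bin. Each player's usage vector is permuted by $\sigma$ and reassigned by $\tau$. Hence the number of unsaturated bins used by player $\tau(i)$ in $x^*$ equals that of player $i$ in $\tilde x$, which is at most one.

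One subtlety must be verified. The Hall argument in Theorem~\ref{thm:player-anchor} uses Lemma~1 of \citet{hao2025integer}, that exactly $k=\lceil nw/u\rceil$ bins are occupied at every PNE. This lemma applies to $\tilde x$ because $\tilde x$ is itself a PNE. The argument also needs the occupied bins to be the first $k$ in the sorted order, so that the anchors $y^{i,\cdot}_{i,\cdot}$ with $i\leq k$ refer to occupied bins. This holds because sorting by load places all nonempty bins before the empty ones.

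Combining the three steps gives a representative with equal social cost satisfying~\eqref{eq:vest-cut},~\eqref{eq:bsb} and~\eqref{eq:player-anchor} simultaneously.
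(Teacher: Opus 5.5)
Your proof is correct and follows the paper's own argument. You obtain a cost-preserving vertex PNE via Theorem~\ref{ISBP: t1}, then apply the bin-and-player relabeling of Theorem~\ref{thm:player-anchor}, noting that simultaneous permutations preserve each player's number of used unsaturated bins and only permute the cost vector; your extra checks (that the vertex condition is exactly~\eqref{eq:vest-cut} via Lemma~\ref{lem:simplex-vertex}, and that Lemma~1 of \citet{hao2025integer} applies to the intermediate PNE $\tilde x$) simply spell out what the paper leaves implicit.
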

\begin{proof}
Apply Theorem~\ref{ISBP: t1} to obtain vertex strategies with the same individual costs, then apply Theorem~\ref{thm:player-anchor}. Bin and player permutations preserve the number of unsaturated bins used by each player, up to relabeling. Hence the vertex restriction remains satisfied, and the cost vector is only permuted.
\end{proof}
These restrictions preserve the minimum social cost among exact equilibria. Player anchoring need not preserve a player-specific weighted objective. No anchoring assertion for $w>u$, or for $\alpha>1$, is used here.

\section{ABS Certificates}
\label{app:abs-certificates}

Table~\ref{tab:isbp-tas} reports the instance-level ABS results for the 22 regular ISBP instances for which Experiment~1 found no exact PNE, discussed in Section~\ref{subsec:comp:abs}.
The corresponding GKGB results appear in Table~\ref{tab:gkgb-tas}: of the 21 instances, six are verified---five because the RRR-BRD warm start of ABS, which uses 5 restarts of 20 rounds instead of the 3 restarts of 15 rounds of Experiment~1, found an exact PNE, so $\alpha^{\mathrm{UB}}=1$ and the interval is closed by the trivial bound $\alpha_L=1$ without any bisection call---and one through bisection---while the rest never obtained a certified lower bound.

\begin{table}[h!]
\caption{ISBP Experiment~2: ABS results for instances with no PNE found in Experiment~1. The Verified column is evaluated at the common threshold $\varepsilon = 10^{-2}$ of Section~\ref{subsec:exp-design}. An entry $\alpha^{\mathrm{LB}} = 1.000$ means nonexistence of an exact PNE was certified at $\alpha = 1$; Iters counts bisection calls including those ending in a time limit, which advance the working interval without contributing a certificate (Section~\ref{sec:tas}).}

\label{tab:isbp-tas}
\centering
\small
\scalebox{0.85}{
\begin{tabular}{rrrrrrrrrrrr}
\toprule
Type & $n$ & $m$ & $w$ & $u$ & $\alpha{=}1$ & $\alpha_{\mathrm{init}}$ & $\alpha^{\mathrm{UB}}$ & $\alpha^{\mathrm{LB}}$ & Verified & Iters & $T_{\mathrm{ABS}}$ \\ 
\midrule
Reg & 5 & 4 & 5 & 8 & INF & 1.081 & 1.081 & 1.080 & TRUE & 7 & 7.8 \\
Reg & 5 & 4 & 7 & 11 & INF & 1.055 & 1.055 & 1.054 & TRUE & 6 & 30.6 \\
Reg & 5 & 4 & 8 & 13 & INF & 1.106 & 1.015 & 1.014 & TRUE & 7 & 31.4 \\
Reg & 5 & 4 & 9 & 14 & INF & 1.041 & 1.034 & 1.034 & TRUE & 6 & 45.7 \\
Reg & 5 & 4 & 10 & 16 & INF & 1.067 & 1.009 & 1.009 & TRUE & 7 & 91.0 \\
Reg & 5 & 4 & 11 & 17 & INF & 1.033 & 1.023 & 1.022 & TRUE & 6 & 246.9 \\
Reg & 5 & 4 & 13 & 20 & INF & 1.028 & 1.016 & 1.016 & TRUE & 5 & 605.3 \\
Reg & 7 & 5 & 11 & 19 & INF & 1.018 & 1.013 & 1.000 & FALSE & 5 & 2089.8 \\
Reg & 7 & 6 & 8 & 11 & INF & 1.073 & 1.037 & 1.000 & FALSE & 7 & 3629.1 \\
Reg & 7 & 6 & 11 & 15 & INF & 1.025 & 1.013 & 1.000 & FALSE & 5 & 2470.8 \\
Reg & 7 & 6 & 13 & 18 & TL & 1.047 & 1.020 & - & FALSE & 6 & 3877.5 \\
Reg & 7 & 6 & 14 & 19 & INF & 1.039 & 1.019 & 1.000 & FALSE & 6 & 2953.6 \\
Reg & 8 & 6 & 7 & 11 & INF & 1.055 & 1.032 & 1.000 & FALSE & 6 & 1372.3 \\
Reg & 8 & 6 & 9 & 14 & INF & 1.041 & 1.021 & 1.000 & FALSE & 6 & 3111.9 \\
Reg & 8 & 6 & 11 & 17 & INF & 1.033 & 1.029 & 1.000 & FALSE & 6 & 3277.5 \\
Reg & 8 & 6 & 12 & 19 & TL & 1.065 & 1.056 & - & FALSE & 7 & 3296.0 \\
Reg & 8 & 6 & 13 & 20 & TL & 1.028 & 1.021 & - & FALSE & 5 & 3539.9 \\
Reg & 8 & 6 & 14 & 22 & INF & 1.055 & 1.021 & 1.000 & FALSE & 6 & 3047.1 \\
Reg & 8 & 6 & 15 & 23 & TL & 1.024 & 1.024 & - & FALSE & 5 & 3620.5 \\
Reg & 9 & 8 & 11 & 14 & TL & 1.041 & 1.040 & - & FALSE & 6 & 3719.9 \\
Reg & 9 & 8 & 15 & 19 & TL & 1.044 & 1.044 & - & FALSE & 6 & 4233.0 \\
Reg & 10 & 8 & 12 & 17 & TL & 1.026 & 1.026 & - & FALSE & 5 & 3628.3 \\
\bottomrule
\end{tabular}
}
\end{table}

\begin{table}[h!]
\caption{GKGB Experiment~2: ABS results for the 21 instances on which Experiment~1 did not confirm a PNE. Verified applies the common threshold $\varepsilon = 10^{-2}$. Rows with $\alpha^{\mathrm{UB}}=1.000$ are instances on which the ABS warm start found an exact PNE; there $\alpha^{\mathrm{LB}}=1.000$ is the trivial bound $\alpha_L=1$ of Section~\ref{sec:tas}, not a nonexistence certificate.}
\label{tab:gkgb-tas}
\centering
\small
\scalebox{0.85}{
\begin{tabular}{rrrrrrrrrrr}
\toprule
Type & $n$ & $m$ & BG & $\alpha{=}1$ & $\alpha_{\mathrm{init}}$ & $\alpha^{\mathrm{UB}}$ & $\alpha^{\mathrm{LB}}$ & Verified & Iters & $T_{\mathrm{ABS}}$ \\
\midrule
Type BC & 2 & 75 & 2 & TL & 1.018 & 1.009 & - & FALSE & 1 & 1.0 \\
Type BC & 2 & 75 & 8 & TL & 1.000 & 1.000 & 1.000 & TRUE & 0 & 0.2 \\
Type BC & 2 & 100 & 5 & TL & 1.003 & 1.003 & - & FALSE & 0 & 1.0 \\
Type BC & 3 & 50 & 2 & TL & 1.022 & 1.011 & - & FALSE & 2 & 2504.6 \\
Type BC & 3 & 75 & 2 & TL & 1.000 & 1.000 & 1.000 & TRUE & 0 & 0.7 \\
Type BC & 3 & 100 & 2 & TL & 1.009 & 1.009 & - & FALSE & 0 & 1.2 \\
Type BC & 3 & 100 & 5 & TL & 1.000 & 1.000 & 1.000 & TRUE & 0 & 0.5 \\
Type BC & 3 & 100 & 8 & TL & 1.000 & 1.000 & 1.000 & TRUE & 0 & 0.7 \\
Type C & 2 & 50 & 2 & INF & 1.006 & 1.006 & 1.000 & TRUE & 0 & 0.9 \\
Type C & 2 & 75 & 2 & TL & 1.005 & 1.005 & - & FALSE & 0 & 0.9 \\
Type C & 2 & 75 & 5 & TL & 1.008 & 1.008 & - & FALSE & 0 & 0.9 \\
Type C & 2 & 100 & 5 & TL & 1.021 & 1.010 & - & FALSE & 2 & 1216.4 \\
Type C & 3 & 50 & 2 & TL & 1.058 & 1.022 & - & FALSE & 3 & 1285.1 \\
Type C & 3 & 50 & 5 & TL & 1.013 & 1.006 & - & FALSE & 1 & 466.7 \\
Type C & 3 & 50 & 8 & TL & 1.024 & 1.012 & - & FALSE & 2 & 1513.1 \\
Type C & 3 & 75 & 2 & TL & 1.024 & 1.012 & - & FALSE & 2 & 1532.2 \\
Type C & 3 & 75 & 5 & TL & 1.000 & 1.000 & 1.000 & TRUE & 0 & 0.3 \\
Type C & 3 & 75 & 8 & TL & 1.004 & 1.004 & - & FALSE & 0 & 2.5 \\
Type C & 3 & 100 & 2 & TL & 1.001 & 1.001 & - & FALSE & 0 & 1.5 \\
Type C & 3 & 100 & 5 & TL & 1.034 & 1.034 & - & FALSE & 2 & 2405.4 \\
Type C & 3 & 100 & 8 & TL & 1.006 & 1.006 & - & FALSE & 0 & 1.5 \\
\bottomrule
\end{tabular}
}
\end{table}

\Needspace{0.4\textheight}
\section{Impact of VEST Cuts: Detailed Comparison}
\label{app:vest-impact}

Table~\ref{tab:vest-comparison} reports the matched-instance comparison discussed in Section~\ref{subsec:comp:vest}.

\begin{table}[H]
\caption{Impact of VEST cuts: CEI--GZR with and without VEST, averaged over matched instances. Speedup ratios are computed from unrounded averages, so they may differ slightly from ratios of the rounded times shown.}
\label{tab:vest-comparison}
\centering
\small
\scalebox{0.85}{
\begin{tabular}{llccccccccc}
\toprule
Game & Cut & Count & $T_{\mathrm{GZR}}$ & $\#_{\mathrm{CEI}}$ & OPT & INF & TL & \multicolumn{2}{c}{Speedup} \\
\cmidrule(lr){9-10}
 & & & & & & & & Agg.\ & Per-inst.\ \\
\midrule
\multirow{2}{*}{ISBP Regular}
  & VEST on & 842 & 8.4 & 89.6 & 818 & 15 & 9 & \multirow{2}{*}{1.552} & \multirow{2}{*}{1.015} \\
  & VEST off & 842 & 13.0 & 84.9 & 817 & 9 & 16 & & \\
\midrule
\multirow{2}{*}{ISBP Non-regular}
  & wVEST on & 842 & 6.1 & 19.5 & 839 & 0 & 3 & \multirow{2}{*}{0.888} & \multirow{2}{*}{1.001} \\
  & wVEST off & 842 & 5.4 & 22.9 & 839 & 0 & 3 & & \\
\midrule
\multirow{2}{*}{NFG}
  & wVEST on & 128 & 66.1 & 1.8 & 121 & 0 & 7 & \multirow{2}{*}{0.923} & \multirow{2}{*}{0.865} \\
  & wVEST off & 128 & 61.1 & 1.9 & 121 & 0 & 7 & & \\
\bottomrule
\end{tabular}
}
\end{table}

\Needspace{0.4\textheight}
\section{GKG Benchmark Results by Cell}
\label{app:gkg-benchmark}

Table~\ref{tab:gkg-benchmark-cells} disaggregates by $(n,m)$ cell the benchmark of Section~\ref{subsec:comp:duguet}, in the cell layout used by \citet{duguet2025branch1}. Both methods are run on their published instance files. GZR finds a PNE in all 60 instances of every cell, with a maximum of $0.85$ seconds and $172$ separated CEIs over the whole set.

The two sides are not run on the same machine, and the settings are as follows. \citet{duguet2025branch1} use a single core of an Intel Xeon Gold 6126 at $2.6$\,GHz with $4$\,GB of RAM under a one-hour limit, calling Gurobi~12.0 from C++ with \texttt{FeasibilityTol}~$=10^{-9}$ and best responses solved at \texttt{MIPGap}~$=0$. We use one thread of an Intel Core Ultra~7 265F at $2.40$\,GHz with $32$\,GB under a ten-minute limit, calling Gurobi~13.0 from Python at the default tolerances of Section~\ref{subsec:exp-design}; this is the only single-threaded run in the paper, and the setting makes little difference here, as the mean time is $0.066$ seconds single-threaded against $0.072$ seconds with 16 threads. Their clock speed is the higher of the two and neither method approaches the memory available to it, so on the dimensions that can be matched the single-threaded comparison is a reasonably even one; the solver version and the language cannot be matched, and the one-hour limit favors their method.

\begin{table}[H]
\caption{GKG benchmark by $(n,m)$ cell, 60 instances per cell, on the instance files of \citet{duguet2025branch1}. For their method, \#S is the number of instances solved within one hour and $\overline{\mathrm{Cuts}}$ the mean number of intersection cuts. For CEI--GZR, all 60 are solved in every cell; $\bar{T}$ is the mean wall-clock seconds and $\overline{\mathrm{CEI}}$ the mean number of separated inequalities, single-threaded under a ten-minute limit.}
\label{tab:gkg-benchmark-cells}
\centering
\small
\scalebox{0.85}{
\begin{tabular}{r rr rr rr rr rr rr}
\toprule
 & \multicolumn{4}{c}{$n=2$} & \multicolumn{4}{c}{$n=3$} & \multicolumn{4}{c}{$n=4$} \\
\cmidrule(lr){2-5}\cmidrule(lr){6-9}\cmidrule(lr){10-13}
 & \multicolumn{2}{c}{Duguet et al.} & \multicolumn{2}{c}{CEI--GZR}
 & \multicolumn{2}{c}{Duguet et al.} & \multicolumn{2}{c}{CEI--GZR}
 & \multicolumn{2}{c}{Duguet et al.} & \multicolumn{2}{c}{CEI--GZR} \\
\cmidrule(lr){2-3}\cmidrule(lr){4-5}\cmidrule(lr){6-7}\cmidrule(lr){8-9}\cmidrule(lr){10-11}\cmidrule(lr){12-13}
$m$ & \#S & $\overline{\mathrm{Cuts}}$ & $\bar{T}$ & $\overline{\mathrm{CEI}}$
    & \#S & $\overline{\mathrm{Cuts}}$ & $\bar{T}$ & $\overline{\mathrm{CEI}}$
    & \#S & $\overline{\mathrm{Cuts}}$ & $\bar{T}$ & $\overline{\mathrm{CEI}}$ \\
\midrule
5 & 60 & 14 & 0.02 & 9.0 & 60 & 122 & 0.03 & 13.9 & 60 & 439 & 0.04 & 17.0 \\
10 & 60 & 768 & 0.03 & 11.7 & 54 & 21,662 & 0.04 & 18.2 & 40 & 71,818 & 0.06 & 24.4 \\
15 & 52 & 31,681 & 0.03 & 12.5 & 20 & 70,718 & 0.05 & 19.9 & 10 & 88,385 & 0.08 & 29.8 \\
20 & 16 & 59,234 & 0.03 & 14.0 & 13 & 72,471 & 0.06 & 22.8 & 3 & 74,203 & 0.09 & 30.5 \\
30 & 9 & 55,748 & 0.04 & 16.0 & 3 & 59,130 & 0.07 & 24.0 & 1 & 53,520 & 0.13 & 37.1 \\
40 & 3 & 45,585 & 0.06 & 19.3 & 1 & 43,821 & 0.08 & 26.8 & 2 & 32,333 & 0.16 & 41.8 \\
50 & 3 & 30,867 & 0.06 & 20.6 & 0 & 29,485 & 0.10 & 30.1 & 1 & 24,760 & 0.16 & 41.6 \\
\bottomrule
\end{tabular}
}
\end{table}

\end{document}